\def\mybibliography{
   \list
   {$[{\arabic{enumi}}]$}{\settowidth\labelwidth{[99]}
   \leftmargin\labelwidth \advance\leftmargin\labelsep
   \usecounter{enumi}}
   \def\newblock{\hskip .11em plus .07em minus -.07em}
   \sloppy \sfcode`\.=1000\relax }
\def\p{\boldsymbol{p}}
\def\q{\boldsymbol{q}}
\def\x{\boldsymbol{x}}
\def\y{\boldsymbol{y}}
\def\f{\boldsymbol{f}}
\def\F{\boldsymbol{F}}
\def\r{\boldsymbol{r}}
\def\v{\boldsymbol{v}}
\def\u{\boldsymbol{u}}
\def\1{\boldsymbol{1}}
\def\0{\boldsymbol{0_3}}
\begin{document}

\title{Optimality Conditions Applied to Free-Time Multi-Burn Optimal Orbital Transfers}

\author{\it
Zheng Chen\thanks{Laboratoire de Math\'ematiques d'Orsay, Univ. Paris-Sud, CNRS, Universit\'e Paris-Saclay, 91405, France. \underline{zheng.chen@math.u-psud.fr.}},\\
%{\it Universit\'e Paris-Sud, Orsay, Ile-de-France, France, 91405}
}

\maketitle{}

\newcommand{\eqnref}[1]{(\ref{#1})}
 \newcommand{\class}[1]{\texttt{#1}}
 \newcommand{\package}[1]{\texttt{#1}}
 \newcommand{\file}[1]{\texttt{#1}}
 \newcommand{\BibTeX}{\textsc{Bib}\TeX}
 \newcommand{\upcite}[1]{{\textsuperscript{\cite{#1}}}}

\newtheorem{definition}{Definition}%[section]
\newtheorem{proposition}{Proposition}%[section]
\newtheorem{problem}{Problem}%[section]
\newtheorem{remark}{Remark}%[section]
\newtheorem{assumption}{Assumption}%[section]
\newtheorem{hypothesis}{Hypothesis}%[section]
\newtheorem{conjecture}{Conjecture}%[section]
\newtheorem{theorem}{Theorem}%[section]
\newtheorem{corollary}{Corollary}%[section]
\newtheorem{lemma}{Lemma}%[section]
\newtheorem{condition}{Condition}

\begin{abstract}
While the Pontryagin Maximum Principle can be used to calculate candidate extremals for optimal orbital transfer problems, these candidates cannot be guaranteed  to be at least locally optimal unless sufficient optimality conditions are satisfied. In this paper, through constructing a parameterized family of extremals around a reference extremal, some second-order necessary and sufficient conditions for the strong-local optimality of the free-time multi-burn fuel-optimal transfer  are established under certain regularity assumptions. Moreover,  the numerical procedure for computing these optimality conditions is presented. Finally, two medium-thrust fuel-optimal trajectories with different number of burn arcs for a typical orbital transfer problem are computed and the local optimality of the two computed trajectories are tested thanks to the second-order optimality conditions established in this paper.
\end{abstract}

\section*{Nomenclature}
\noindent\begin{tabular}{@{}lcl@{}}
$\f$, $\f_0$, $\f_1$ &=& vector fields\\
$h$, $H$ &=& Hamiltonian and maximized Hamiltonian\\
$I_n$ &=& identity matrix of $\mathbb{R}^{n\times n}$\\
$m$, $m_c$ &=& mass and the mass of a spacecraft without any fuel, kg\\
$\mathbb{N}$ &=& set of natural numbers\\
$Oxyz$ &=& an Earth-centered inertial Cartesian coordinate\\
$\p$, $\p_r$, $\p_v$, $p_m$ &=& costates\\
$\r$ &=& position vector, m\\
$\mathbb{R}^n$, $(\mathbb{R}^n)^*$ &=& spaces of $n$-dimensional column and row vectors\\
$t$ &=& time, s\\
$\u$, $\mathcal{U}$ &=& control (or thrust) vector and its admissible set\\
$u_{max}$ &=& maximum magnitude of thrust\\
$\v$ &=& velocity vector, m/s\\
$\x$, $\mathcal{X}$ &=& state vector and its admissible set\\
$\boldsymbol{0}_{i\times j}$ &=& zero matrix of $\mathbb{R}^{i\times j}$\\
$\mu$ &=& Earth gravitational parameter\\
$\Pi$ &=& canonical projection\\

subscript\\
$f$ &=& final condition\\
$0$ &=& initial condition\\

superscript\\

$T$ &=& matrix transpose
\end{tabular} \\

\section{Introduction}

\indent Since the Pontryagin Maximum Principle (PMP) \cite{Pontryagin} was developed by a group of Russian researchers in 1950s, an increasing number of papers on the topic of space trajectory optimization have been published, showing that the PMP is a powerful tool to identify candidate extremals for optimal orbital transfer problems. However, the PMP requiring the first variation of a cost functional to vanish cannot guarantee these candidates to be at least locally optimal unless second-order necessary and sufficient optimality conditions are satisfied. Moreover, the satisfaction of sufficient conditions is a prerequisite to perform a neighboring optimal feedback guidance scheme \cite{Kelley:62,Chuang:96,Chuang:95,Lee:65}. Hence, once a candidate extremal is computed by the PMP, it is indeed crucial to establish sufficient optimality conditions which, when met, guarantee that the candidate is at least locally optimal.

The classical Jacobi no-conjugate-point condition, derived from the calculus of variations \cite{Bryson:69,Mermau:76,Breakwell:65}, has been widely used to test second-order necessary and sufficient conditions if the extremal is smooth. The test  is generally done by checking the explosive time of the matrix solution of a Riccati differential equation. Based on this method, the second-order sufficient conditions for singular (or intermediate-thrust) space trajectories, along which the PMP is trivially satisfied, were studied in \cite{Kelley:66,Popescu:06,Breakwell:75}. Using a transition matrix method, which transforms the test of the unboundedness of the matrix solution into detecting the zero of a scalar, a numerical procedure for testing the sufficient optimality conditions for continuous-thrust orbital transfer trajectories has been developed recently in \cite{Jo:00,Prussing:05}. Nevertheless, a challenge arizes when we consider a finite-thrust fuel-optimal problem because the corresponding optimal control function   exhibits a bang-bang behavior if the transfer time is greater than the minimum transfer time for the same boundary conditions \cite{Gergaud:06}. To the author's knowledge, through testing conjugate points on each burn arc, Chuang {\it et al.} \cite{Chuang:96,Chuang:95} presented a primary study on the sufficient optimality conditions for planar multi-burn orbital transfer problems.

Second order conditions in the bang-bang case have received an extensive treatment; references include the paper of Sarychev \cite{Sarychev:97} followed by \cite{Agrachev:02} and \cite{Maurer:04,Osmolovskii:05,Osmolovskii:07}.  More recently, a regularization procedure has been developed in \cite{Silva:10} for single-input systems. These papers consider controls taking values in polyhedra, showing that conjugate points occur only at switching times. However, the control for the orbital transfer problem studied in the present paper takes values in a Euclidean ball. In recent years, a study on the method of characteristics carried out by Noble and Sch\"attler \cite{Noble:02} shows that along a bang-bang extremal conjugate points can occur not only on each smooth bang arc but also at a switching point if a transversality condition at the switching point is violated (see a more recent work in \cite{Schattler:12}). Assuming the endpoints of an optimal control problem are fixed, it has been proven in \cite{Caillau:15} that a bang-bang extremal realizes a strict strong-local optimum if both the Jacobi no-conjugate-point condition and the transversality condition are satisfied on each smooth bang arc and at each switching point, respectively. Generalizing the problem with fixed endpoints to the problem that the final point varies on a smooth submanifold, an extra necessary and sufficient condition, involving the geometry of the final constraint manifold, has been established in \cite{Chen:153bp} recently.  However, as is shown in Sect. \ref{SE:Sufficient}, one cannot  apply the optimality conditions developed in \cite{Schattler:12,Noble:02,Caillau:15,Chen:153bp} to the free-time optimal orbital transfer problem. In this paper,  through employing the geometric methods developed  in  \cite{Agrachev:04,Schattler:12,Noble:02,Caillau:15,Bonnard:07,Chen:153bp}, the sufficient optimality conditions for the free-time multi-burn orbital transfer problem are established and the numerical procedure for testing such conditions is presented. The crucial idea is to construct a parameterized family of extremals around a reference extremal such that the theory of field of extremals can be applied.

The paper is organized as follows. In Sect. \ref{SE:Problem_Formulation}, the finite-thrust fuel-optimal orbital transfer problem is formulated, and the first-order necessary conditions are derived by applying the PMP. In Sect. \ref{SE:Sufficient},
 under some regularity assumptions, three second-order sufficient conditions, ensuring a bang-bang extremal trajectory of the free-time orbital transfer problem to be a strict strong-local optimum, are established. In Sect. \ref{SE:Procedure}, a numerical implementation for these sufficient conditions is presented. In Sect. \ref{SE:Numerical}, to illustrate the theoretical development of this paper, two fuel-optimal  trajectories with different number of burn arcs for a typical orbital transfer problem  are calculated.

\section{Optimal control problem}\label{SE:Problem_Formulation}

\subsection{Dynamics}

%\subsubsection{Dynamics}

Consider the spacecraft as a mass point moving around the Earth. The state  in an  Earth-centered inertial Cartesian coordinate, denoted by $Oxyz$, consists of the position vector $\r\in\mathbb{R}^3\backslash\{0\}$, the velocity vector $\v\in\mathbb{R}^3$, and the mass $m\in\mathbb{R}_+$. Let $t\in\mathbb{R}_+$ be the time, then the differential equations for the finite-thrust orbital transfer problem can be written as
\begin{eqnarray}
\begin{cases}
\dot{\r}(t) = \v(t),\\
\dot{\v}(t) = -\frac{\mu}{\parallel \r(t) \parallel^3}\r(t) + \frac{\boldsymbol{u}(t)}{m(t)},\\
\dot{m}(t) = -\beta{\parallel \boldsymbol{u}(t)\parallel},\label{EQ:mass}
\end{cases}
\label{EQ:Sigma}
\end{eqnarray}
where $\|\cdot\|$ denotes the Euclidean norm, $\beta > 0$ is a scalar constant determined by the specific impulse of the engine equipped on the spacecraft. The thrust (or control) vector $\boldsymbol{u}\in\mathbb{R}^3$   takes values in the admissible set
\begin{equation}
\mathcal{U}=\big\{\boldsymbol{u} \in \mathbb{R}^3 \ \arrowvert\ \parallel \boldsymbol{u} \parallel \leq u_{max}\big\},\nonumber\\
\end{equation}
where $u_{max}>0$ is the maximum magnitude of thrust.
 Let $n=7$ be the dimension of the state space and denote by $\x\in\mathbb{R}^n$ the state such that $\x = (\r,\v,m)$; we define the admissible set for state $\x$ by
\begin{eqnarray}
\mathcal{X}=\{(\r,\v,m)\in\mathbb{R}^3\backslash\{0\} \times \mathbb{R}^3\times\mathbb{R}_+\ \arrowvert\  \r\times\v \neq 0,\  m \geq m_c\},\nonumber
\end{eqnarray}
where $m_c>0$ is the mass of the spacecraft without any fuel.

Let $\rho\in[0,1]$ be the normalized mass flow rate of the engine, i.e., $\rho = \parallel \boldsymbol{u}\parallel/u_{max}$, and let $\boldsymbol{\omega}\in\mathbb{S}^2$ be the unit vector of thrust direction.
 In order to avoid heavy notations, we define the controlled vector field $\f$ on $\mathcal{X}\times\mathcal{U}$ by
\begin{eqnarray}
\f:\mathcal{X}\times\mathcal{U}\rightarrow T_{\x}\mathcal{X},\ \f(\x,\boldsymbol{u}) = \f_0(\x) + \rho\f_1(\x,\boldsymbol{\omega}),\nonumber
\end{eqnarray}
where 
\begin{eqnarray}
\f_0(\x) = \left(\begin{array}{c}\v\\
-\frac{\mu}{\parallel \r\parallel^3}\r\\
0
\end{array}\right)\ \text{and}\ \f_1(\x,\boldsymbol{\omega}) = \left(\begin{array}{c}
\boldsymbol{0}\\
\frac{u_{max}}{m}\boldsymbol{\omega}
\\-\beta{u_{max}}
\end{array}\right).\nonumber
\end{eqnarray}
Then, the dynamics  in Eq.(\ref{EQ:Sigma}) can be rewritten as
\begin{eqnarray}
\dot{\x}(t) =\f(\x(t),\boldsymbol{u}(t)) = \f_0(\x(t)) + \rho(t) \f_1(\x(t),\boldsymbol{\omega}(t)).
\label{EQ:system}
\end{eqnarray}
This form of dynamics will be used later to establish sufficient optimality conditions.

\subsection{Fuel-optimal problem}\label{Subsection:Fuel}

Let  $\x_f\in\mathcal{X}$ be the final state and let $s\in\mathbb{N}$ be a positive integer such that $0<s\leq n$; we define the constraint submanifold of the final state $\x_f$ by
\begin{eqnarray}
 \mathcal{M} = \big\{\x_f\in\mathcal{X}\ \arrowvert\ \phi(\x_f) = 0\big\},\label{EQ:final_manifold}
\end{eqnarray}
where $\phi:\mathcal{X}\rightarrow \mathbb{R}^{s}$ is a twice continuously differentiable function of $\x_f$ and its expression depends on specific mission requirements. Then, the fuel-optimal problem is defined as following.
 \begin{definition}[Fuel-optimal problem ({\it FOP})]
Given a fixed initial point $\x_0\in\mathcal{X}\backslash\mathcal{M}$, the fuel-optimal problem consists of steering the system of Eq. (\ref{EQ:system}) in the admissible set $\mathcal{X}$ by a measurable control $\boldsymbol{u}(\cdot)\in\mathcal{U}$ on a finite time interval $[0,t_f]$  from the initial point $\x_0$  to a final point $\x_f\in\mathcal{M}$ such that the fuel consumption is minimized, i.e., 
\begin{eqnarray}
\int_{0}^{t_f}\rho(t) dt \rightarrow \text{min},
\label{EQ:cost_functional}
\end{eqnarray}
where $t_f>0$ is the free final time.
\end{definition}
\noindent It is worth remarking here that either the number of burn arcs or the final true longitude\footnote{The true longitude is the sum of the true anomaly, the argument of periapsis, and the argument of right ascending node of the classical orbital elements (see \cite{Broucke:72} for detailed definition).} has to been fixed when solving the free-time orbital transfer problem; otherwise the problem is ill-posed \cite{Oberle:97}. The controllability of the system in Eq. (\ref{EQ:system}) holds in the admissible set $\mathcal{X}$ for every positive $u_{max}$ if $m_c$ is small enough (see, e.g., \cite{Chen:15controllability}). Let $t_m>0$ be the minimum transfer time from the initial point $\x_0$ to a final point $\x_f\in\mathcal{M}$, if $t_f \geq t_m$, there exists at least one fuel-optimal solution in $\mathcal{X}$ according to the existence result of Gergaud and Haberkorn \cite{Gergaud:06}. Thanks to the controllability and the existence results, the PMP is applicable to formulate the following Hamiltonian system. 

\subsubsection{Hamiltonian system}

According to the PMP \cite{Pontryagin}, if an admissible controlled trajectory $\x(\cdot)\in\mathcal{X}$ associated with a measurable control $\boldsymbol{u}(\cdot)\in\mathcal{U}$  on $[0,t_f]$ is an optimal one of the {\it FOP}, there exists a nonpositive real number $p^0$ and an absolutely continuous mapping $t\mapsto\p(\cdot)\in T_{\x(\cdot)}^*\mathcal{X}$ on $[0,t_f]$, satisfying $(\p,p^0)\neq 0$ and called adjoint state, such that almost everywhere on $[0,t_f]$ there holds
\begin{eqnarray}
\begin{cases}
\dot{\x}(t) = \frac{\partial h}{\partial \p}(\x(t),\p(t),p^0,\boldsymbol{u}(t)),\\
\dot{\p}(t) = -\frac{\partial h}{\partial \x}(\x(t),\p(t),p^0,\boldsymbol{u}(t)),
\end{cases}
\label{EQ:cannonical}
\end{eqnarray}
and
\begin{eqnarray}
h({\x}(t),{\p}(t),{p}^0,\boldsymbol{u}(t)) =\underset{\boldsymbol{u}^*(t)\in\mathcal{U}}{\text{max}} h({\x}(t),{\p}(t),{p}^0,\boldsymbol{u}^*(t)) ,
\label{EQ:maximum_condition}
\end{eqnarray}
where 
\begin{eqnarray}
h(\x,\p,p^0,\boldsymbol{u}) = \p\f_0(\x) + \rho \p\f_1(\x,\boldsymbol{\omega}) + p^0 \rho
\label{EQ:Hamiltonian}
\end{eqnarray}
 is the Hamiltonian. Since the final time is free and the dynamics is not dependent on time explicitly, there holds
 \begin{eqnarray}
 h(\x(t),\p(t),p^0,\u(t))\equiv 0, \ t\in[0,{t}_f].
 \label{EQ:Hamiltonian_=0}
 \end{eqnarray}
Moreover,  the  boundary transversality condition implies
\begin{eqnarray}
\boldsymbol{p}(t_f) =  \boldsymbol{\nu}{{\nabla} \phi(\x(t_f))},%\\
\label{EQ:Transversality}
\end{eqnarray}
where the notation ``~$\nabla$~'' denotes the vector differential operator and $\boldsymbol{\nu}\in(\mathbb{R}^{s})^*$ is a constant row vector whose elements are Lagrangian multipliers.

The 4-tuple $t\mapsto (\x(t),\p(t),p^0,\boldsymbol{u}(t))\in T^*\mathcal{X}\times \mathbb{R}\times\mathcal{U}$ on $[0,t_f]$, if satisfying Eqs.~(\ref{EQ:cannonical}--\ref{EQ:Hamiltonian}), is called an extremal.  Furthermore, an extremal is called a normal one if $p^0\neq 0$ and it is called an abnormal one if $p^0 = 0$. The abnormal extremals have been ruled out by Gergaud and Haberkorn \cite{Gergaud:06}. Thus, only normal extremals are considered and $(\p,p^0)$ is normalized such that $p^0 = -1$ hereafter. According to the maximum condition in Eq.~(\ref{EQ:maximum_condition}), given every extremal $(\x(\cdot),\p(\cdot),p^0,\boldsymbol{u}(\cdot))$ on $[0,t_f]$, the corresponding extremal control  $\boldsymbol{u}(\cdot)$ is a function of $(\x(\cdot),\p(\cdot))$ on $[0,t_f]$, i.e., $\boldsymbol{u}(\cdot) = \boldsymbol{u}(\x(\cdot),\p(\cdot))$ on $[0,t_f]$. Thus, with some abuses of notations, we denote by $(\x(\cdot),\p(\cdot))\in T^*\mathcal{X}$ on $[0,t_f]$ the normal extremal and $H(\x(\cdot),\p(\cdot))$ on $[0,t_f]$ the corresponding maximized Hamiltonian, i.e., 
$$H(\x(t),\p(t)) := \underset{\boldsymbol{\u}^*(t)\in \mathcal{U}}{\text{max}}\ h(\x(t),\p(t),-1,\boldsymbol{u}^*(t)),\ t\in[0,t_f],$$
 which  is rewritten as
\begin{eqnarray}
H(\x(t),\p(t)) = H_0(\x(t),\p(t)) + \rho(\x(t),\p(t)) H_1(\x(t),\p(t)),\nonumber
\end{eqnarray}
where $
H_0(\x,\p) = \p \f_0(\x)$ is the drift Hamiltonian and $
H_1(\x,\p) = \p \f_1(\x,\boldsymbol{\omega}(\x,\p)) - 1$ is the switching function.

\subsubsection{Necessary Conditions}\label{Subsection:Necessary}

Let $\p_r\in T_{\r}^*\mathbb{R}^3$, $\p_v\in T_{\v}^*\mathbb{R}^3$, and $p_m\in T_{m}^*\mathbb{R}$ be the costates with respect to $\r$, $\v$, and $m$, respectively, such that $\p = (\p_r,\p_v,p_m)$. Then the maximum condition in Eq.~(\ref{EQ:maximum_condition}) implies 
\begin{eqnarray}
\boldsymbol{\omega} = \p_v^T / \|\p_v \|,\ \  \text{if}\ \ \| \p_v\|\neq 0,
\label{EQ:Max_condition2}
\end{eqnarray}
and
\begin{eqnarray}
\begin{cases}
\rho =1,\ \ \ \ \  \text{if}\ H_1 > 0,
\\
\rho = 0, \ \ \ \ \ \text{if}\ H_1 < 0.
\end{cases}
\label{EQ:Max_condition1}
\end{eqnarray}
Thus, the optimal direction of the thrust vector $\boldsymbol{u}$ is collinear with the adjoint vector $\p_v$ which is well-known as  the primer vector \cite{Lawden:63}. 
While an extremal $(\x(\cdot),\p(\cdot))\in T^*\mathcal{X}$ on $[0,t_f]$ is called a nonsingular one if $H_1(\x(\cdot),\p(\cdot))$ has only isolated zeros on $[0,t_f]$, it is called  a singular one if there is a finite interval $[t_1,t_2]\subseteq[0,t_f]$ such that $H_1(\x(\cdot),\p(\cdot))\equiv 0$ on $[t_1,t_2]$.

Though the necessary conditions in Eqs.~(\ref{EQ:cannonical}-\ref{EQ:Transversality}) can be used to compute extremals by solving a two-point boundary value problem \cite{Pan:13}, the computed extremals cannot be guaranteed to be at least locally optimal unless sufficient optimality conditions are satisfied. Assuming an extremal is totally singular, the sufficient conditions have been studied by  Breakwell {\it et al.} \cite{Breakwell:75}  and  Popescu \cite{Popescu:06} independently. For nonsingular extremals with totally continuous thrust, e.g., the extremals of time-optimal orbital transfer problems, both the procedures developed in \cite{Prussing:05,Jo:00} and the classical methods in \cite{Wood:74,Mermau:76,Breakwell:65,Bryson:69} can be directly used to test sufficient optimality conditions. In next section, the sufficient conditions for the strong-local optimality of the nonsingular extremals with bang-bang controls will be established.

\section{Sufficient optimality conditions for bang-bang extremals}\label{SE:Sufficient}

 Before studying the sufficient optimality conditions, we firstly give the following definition of local optimality \cite{Bonnard:07}.
\begin{definition}
Given an extremal trajectory $\bar{\x}(\cdot)\in \mathcal{X}$ of the {\it FOP}, let $\bar{t}_f>0$ be the optimal final time and let $\bar{\boldsymbol{u}}(\cdot)\in\mathcal{U}$ on $[0,\bar{t}_f]$ be the extremal control. Then, assuming $\sigma>0$ is small enough, we say that $\bar{\x}(\cdot)\in \mathcal{X}$  on $[0,\bar{t}_f]$ realizes a weak-local optimum in $L^{\infty}$-topology (resp. strong-local optimum in $C^0$-topology) if there exists an open neighborhood $\mathcal{W}_{\boldsymbol{u}}\subseteq\mathcal{U}$ of $\bar{\boldsymbol{u}}(\cdot)$ in $L^{\infty}$-topology (resp. an open neighborhood $\mathcal{W}_{\x}\subseteq\mathcal{X}$ of $\bar{\x}(\cdot)$ in $C^{0}$-topology) such that for every $t_f\in[\bar{t}_f-\sigma,\bar{t}_f+\sigma]$ and every admissible controlled trajectory $\x(\cdot)\in\mathcal{X}$ associated with the measurable control $\boldsymbol{u}(\cdot)\in\mathcal{W}_{\boldsymbol{u}}$ on $[0,t_f]$  (resp. every admissible controlled  trajectory $\x(\cdot)\in\mathcal{W}_{\x}$ associated with the measurable control $\boldsymbol{u}(\cdot)\in\mathcal{U}$ on $[0,t_f]$)  with the boundary conditions $\x(0) = \bar{\x}(0)$ and $\phi(\x(t_f))=0$, there holds 
$$\int_{0}^{t_f}\parallel \boldsymbol{u}(t) \parallel dt \geq \int_{0}^{\bar{t}_f}\parallel \bar{\boldsymbol{u}}(t) \parallel dt.$$
We say it  realizes a strict weak-local (resp. strict strong-local) optimum if the strict inequality holds.
\label{DE:Optimality}
\end{definition}
\noindent Note that, if a trajectory $\bar{\x}(\cdot)\in\mathcal{X}$ on $[0,t_f]$ realizes a strong-local optimum,  it automatically realizes a weak-local one.

\subsection{Parameterized family of extremals }

%Given a reference extremal $(\bar{\x}(\cdot),\bar{\p}(\cdot))\in T^*\mathcal{X}$ on $[0,t_f]$, let $\mathcal{P}\subset T^*_{\x_0}\mathcal{X}$ be an open neighbourhood of $\bar{\p}_0$. 
For every $\p_0\in T^*_{\x_0}\mathcal{X}$ and every $t_f>0$, we define by
\begin{eqnarray}
{\gamma}:[0,t_f]\times T^*_{\x_0}\mathcal{X}\rightarrow T^*\mathcal{X}, \ (t,\p_0)\mapsto (\x(t),\p(t)),\nonumber
\end{eqnarray}
the solution trajectory of Eqs.~(\ref{EQ:cannonical}--\ref{EQ:Hamiltonian}) such that $(\x_0,\p_0) = \gamma(0,\p_0)$.  In the remainder part of this paper, we specify $\bar{\p}_0\in T^*_{\x_0}\mathcal{X}$ and $\bar{t}_f\in\mathbb{R}_+$ in such a way that $\gamma(\cdot,\bar{\p}_0)$ on $[0,\bar{t}_f]$ is the extremal of the {\it FOP}. Hence, denoting by $\gamma(\cdot,\bar{\p}_0)$ on $[0,\bar{t}_f]$ the reference extremal, we will establish sufficient optimality conditions for this reference extremal hereafter.
\begin{definition}
Given the reference extremal $\gamma(\cdot,\bar{\p}_0)$ on $[0,\bar{t}_f]$, let $\mathcal{P}\subset T^*_{\x_0}\mathcal{X}$ be an open neighbourhood of $\bar{\p}_0$ and let $\sigma>0$ be small enough. Then, we define by
\begin{eqnarray}
\mathcal{F}_{\p_0} = \Big\{(\x(t),\p(t))\in T^*\mathcal{X}\ \arrowvert \ (\x(t),\p(t)) = \gamma(t,\p_0),\ t\in[0,t_f],\ t_f\in[\bar{t}_f-\sigma,\bar{t}_f+\sigma],\ \p_0\in\mathcal{P}\Big\},\nonumber
\end{eqnarray}
the $\p_0$-parameterized family of extremals around the reference extremal $\gamma(\cdot,\bar{\p}_0)$ on $[0,\bar{t}_f]$.
\end{definition}
\noindent Let us define by the mapping
\begin{eqnarray}
\Pi:  T^*\mathcal{X} \rightarrow \mathcal{X},\ \  \Pi(\x,\p) = \x,\nonumber
\end{eqnarray}
 the canonical projection that projects a submanifold from the cotangent bundle $T^*\mathcal{X}$ onto the state space $\mathcal{X}$.
If the restriction of $\Pi(\mathcal{F}_{\p_0})$ onto the state space $\mathcal{X}$ loses its local diffeomorphism at a time $t_c\in(0,t_f]$, we say the projection at  $t_c$ is a fold singularity. 
%\end{definition}

The local optimality of the reference extremal is related to  fold singularities of $\Pi(\mathcal{F}_{\p_0})$  through the notion of conjugate and focal point (see, e.g., \cite{Agrachev:04,Bonnard:07}), as is shown by the typical picture in Fig. \ref{Fig:smooth_fold}. 
\begin{figure}[!ht]
 \begin{center}
 \includegraphics[trim = 2cm 2.0cm 2cm 0cm, clip=true, width=3in, angle=0]{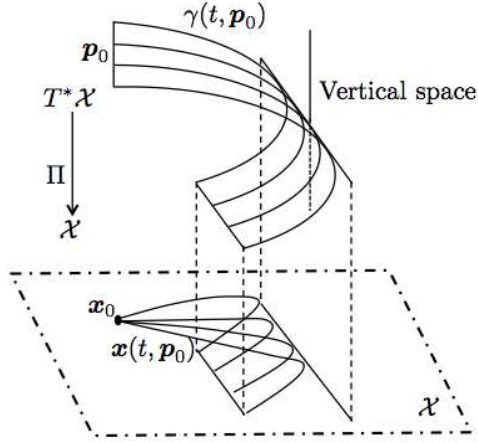}
 \end{center}
\caption[]{A typical picture for the occurrence of a conjugate point, i.e., the fold singularity for the  projection of $\mathcal{F}$ onto state space $\mathcal{X}$.}
 \label{Fig:smooth_fold}
\end{figure}
 Let $(\x(t,\p_0),\p(t,\p_0)) = \gamma(t,\p_0)$ for $(t,\p_0)\in[0,t_f]\times\mathcal{P}$ and assume that the final time  is fixed; a fold singularity occurs at a time $t_c\in(0,\bar{t}_f]$ if $\det\left[{\partial \x}(t_c,\bar{\p}_0)/{\partial \p_0}\right] = 0$ \cite{Schattler:12}. Hence, conjugate points for fixed-time orbital transfer problems are tested by detecting the zero of $\det\left[{\partial \x}(\cdot,\bar{\p}_0)/{\partial \p_0}\right]$ on $(0,\bar{t}_f]$ in \cite{Chen:153bp,Caillau:15,Agrachev:04}. 
  However, according to Eq.~(\ref{EQ:Hamiltonian_=0}), $H(\x,\p)= 0$ for every candidate extremal $(\x,\p)$ of the free-time problem. Thus, for the free-time problem, $\p_0$ lies in the subset
\begin{eqnarray}
\mathcal{H} = \{\p_0\in T^*_{\x_0}\mathcal{X}\ \arrowvert\ H(\x_0,\p_0) = 0\}.\nonumber
\end{eqnarray}
Note that $\bar{\p}_0\in\mathcal{H}$.
Since the subset $\mathcal{H}$ is locally diffeomorphic to $\mathbb{R}^{n-1}$, there holds $$\left.\text{rank}\left[\frac{\partial \x}{\partial \p_0}(\cdot,{\p}_0)\right]\right\rvert_{\p_0 = \bar{\p}_0} \leq n-1$$ everywhere on $[0,\bar{t}_f]$, which in further indicates $\det\left[{\partial \x}(\cdot,\bar{\p}_0)/{\partial \p_0}\right] \equiv 0$ on $[0,\bar{t}_f]$ if the final time is free \cite{Bonnard:07}. 
Therefore, for the free-time orbital transfer problem, one cannot test conjugate points by detecting the zero of $\det\left[{\partial \x}(\cdot,\bar{\p}_0)/{\partial \p_0}\right]$ any more. In next paragraph, a new parameterized family of extremals will be constructed such that the numerically verifiable conditions for conjugate points can be established.
 
 \begin{assumption}
 Given the reference extremal $(\bar{\x}(\cdot),\bar{\p}(\cdot)):=\gamma(\cdot,\bar{\p}_0)$ on $[0,\bar{t}_f]$, assume  the Hamiltonian $H(\bar{\x}(\cdot),\bar{\p}(\cdot))$ is regular on $[0,\bar{t}_f]$, i.e., ${\partial H}(\bar{\x}(\cdot),\bar{\p}(\cdot))/{\partial \x} \neq \boldsymbol{0}$ and ${\partial H}(\bar{\x}(\cdot),\bar{\p}(\cdot))/{\partial \p} \neq \boldsymbol{0}$ on $[0,\bar{t}_f]$.
 \label{AS:Regular_Hamiltonian}
 \end{assumption}
 \noindent As a result of this assumption,  there exists a full rank matrix $\boldsymbol{E}\in\mathbb{R}^{n\times(n-1)}$ such that its each column vector is 
orthogonal to the vector ${\partial H}(\x_0,\bar{\p}_0)/{\partial \p_0} = \f(\x_0,\u(\x_0,\bar{\p}_0))$.  Since the matrix $\boldsymbol{E}$ is of full rank, we are able to define an invertible function $\boldsymbol{F}:\mathcal{P}\cap\mathcal{H}\rightarrow (\mathbb{R}^{n-1})^*,\ \p_0\mapsto \F(\p_0)$ as
\begin{eqnarray}
\F(\p_0) :=   (\p_0-\bar{\p}_0) \boldsymbol{E}\left[\boldsymbol{E}^T \boldsymbol{E}\right]^{-1},
\label{EQ:q}
\end{eqnarray}
such that both the function and its inverse are smooth. For notational simplicity, given every  neighbourhood $\mathcal{P}$ of $\bar{\p}_0$, we define by
\begin{eqnarray}
\mathcal{Q} = \{\q\in(\mathbb{R}^{n-1})^*\ \arrowvert\ \q = \F(\p_0), \ \p_0\in\mathcal{P}\cap\mathcal{H}\},
\label{EQ:Q}
\end{eqnarray}
the subset associated with $\mathcal{P}$. If $\bar{\q} := \F(\bar{\p}_0)$, then there holds $\bar{\q} = \boldsymbol{0}$ and $\mathcal{Q}\subset(\mathbb{R}^{n-1})^*$ is an open neighborhood of $\bar{\q}$. For every $\q\in\mathcal{Q}$ and every $t_f>0$, we define by 
\begin{eqnarray}
\Gamma:[0,t_f]\times\mathcal{Q}\rightarrow T^*\mathcal{X},\ (t,\q)\mapsto (\x(t),\p(t)),
\label{EQ:Gamma}
\end{eqnarray}
the solution trajectory of  Eqs.~(\ref{EQ:cannonical}--\ref{EQ:Hamiltonian}) such that $(\x_0,\F^{-1}(\q)) = \Gamma(0,\q)$. It is clear that $\gamma(\cdot,\bar{\p}_0)=\Gamma(\cdot,\bar{\q})$ on $[0,\bar{t}_f]$.
\begin{definition}
Given the reference extremal $\Gamma(\cdot,\bar{\q})$ on $[0,\bar{t}_f]$, let $\sigma>0$ be small enough. Then, we define by
\begin{eqnarray}
\mathcal{F}_{\q} = \big\{(\x(t),\p(t))\in T^*\mathcal{X}\ \arrowvert\ (\x(t),\p(t)) = \Gamma(t,\q),\ t\in[0,t_f],\ t_f\in[\bar{t}_f-\sigma,\bar{t}_f+\sigma],\ \q\in\mathcal{Q}\big\},\nonumber
\end{eqnarray}
the $\q$-parameterized family of extremals around the reference extremal.
\end{definition}
\noindent According to Eqs.~(\ref{EQ:q}--\ref{EQ:Gamma}), there holds $\mathcal{F}_{\p_0} = \mathcal{F}_{\q}$ if $\p_0\in\mathcal{P}\cap\mathcal{H}$. Thus, it suffices to study the projection behaviour  of the family $\mathcal{F}_{\q}$ instead in order to formulate the conditions for conjugate points of the extremal $\Gamma(\cdot,\bar{\q})$ on $[0,\bar{t}_f]$.

\subsection{Sufficient optimality conditions for $s=n$}

Without loss of generality, let the positive integer $k\in\mathbb{N}$ be the number of switching times $\bar{t}_i$ ($i =1,2,\cdots,k$) along the extremal $\Gamma(\cdot,\bar{\q})$ on $[0,\bar{t}_f]$ such that $0 = \bar{t}_0< \bar{t}_1 < \bar{t}_2 < \cdots < \bar{t}_k < \bar{t}_{k+1} = \bar{t}_f$. 
\begin{assumption}
Along the extremal $(\bar{\x}(\cdot),\bar{\p}(\cdot)) = \Gamma(\cdot,\bar{\q})$ on $[0,\bar{t}_f]$, each switching point at the switching time $\bar{t}_i$  is assumed to be a regular one, i.e., $H_1(\bar{\x}(\bar{t}_i),\bar{\p}(\bar{t}_i)) = 0$ and ${\dot{H}_1}(\bar{\x}(\bar{t}_i),\bar{\p}(\bar{t}_i))\neq 0$ for $i=1,2,\cdots,k$.
\label{AS:Regular_Switching}
\end{assumption}
\noindent 
As a result of this assumption, if the subset $\mathcal{Q}$ is small enough, the number of switching times on every extremal $\Gamma(\cdot,\q)$ is $k$ and the $i$-th switching time is a smooth function of $\q$. Thus, we denote by 
\begin{eqnarray}
t_i:\mathcal{Q}\rightarrow \mathbb{R}_+,\ \q\mapsto t_i(\q),\nonumber
\end{eqnarray}
the $i$-th switching time of the extremals $\Gamma(\cdot,\q)$ in $\mathcal{F}_{\q}$. Let us denote  by $\delta(\cdot)$  the determinant of the matrix ${\nabla \x}(\cdot,\bar{\q})$ on $[0,\bar{t}_f]$, i.e.,
\begin{eqnarray}
\delta (t): = \det\left[{\nabla \x}({t,\bar{\q}})\right], \ t\in[0,\bar{t}_f],\nonumber
\end{eqnarray}
where ${\nabla \x}({t,\bar{\q}}) = \left(\dot{\x}(t,\bar{\q}),{\partial \x(t,\bar{\q})}/{\partial \q}\right)$. Note that $\delta(\cdot)$ on $[0,\bar{t}_f]$ is a piecewise continuous function (see, e.g., \cite{Caillau:15}).
\begin{remark}
Assuming the subset $\mathcal{Q}$ is small enough, the projection of the family $\mathcal{F}_{\q}$ restricted to each domain $(\bar{t}_i,\bar{t}_{i+1})\times\mathcal{Q}$ for $i=0,1,\cdots,k$ is a local diffeomorphism if $\delta(\cdot) \neq 0$ on $(\bar{t}_i,\bar{t}_{i+1})$ and the projection at a time $t_c\in(\bar{t}_i,\bar{t}_{i+1})$ is a fold singularity  if $\delta (t_c) = 0$ \cite{Bonnard:07}.
\end{remark}
\noindent  Therefore, one can test conjugate points for the free-time problem by detecting the zero of $\delta(\cdot)$ on $(\bar{t}_i,\bar{t}_{i+1})$ for $i=0,\ 1,\ \cdots,\ k$.
\begin{condition}
$\delta(\bar{t}_f) \neq 0$ and $\delta ({\cdot}) \neq 0$ on each open interval $(\bar{t}_i,\bar{t}_{i+1})$ for $i=0,1,\cdots,k$.
\label{AS:Disconjugacy_bang}
\end{condition}
\noindent Though this condition guarantees that the projection of the family $\mathcal{F}_{\q}$ restricted to each domain $(\bar{t}_{i},\bar{t}_{i+1})\times\mathcal{Q}$ for $i=0,1,\cdots,k$  is a diffeomorphism if the subset $\mathcal{Q}$ is small enough, it is not sufficient to guarantee that the projection of the subset $\mathcal{F}_{\q}$ restricted to the whole domain $(0,\bar{t}_f]\times\mathcal{Q}$ is a diffeomorphism as well, as Fig. \ref{Fig:trans} shows that the flows $\x(t,\q)$ may intersect with each other near the switching time $t_i(\q)$.
\begin{figure}[!ht]
 \centering\includegraphics[trim=2cm 1cm 2cm 0cm, clip=true, width=3in, angle=0]{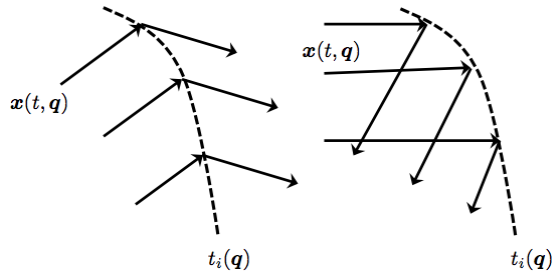}
\caption[]{The flows $\x(t,\q)\in\mathcal{X}$ near the switching time $t_i(\q)$. The left plot shows that the projection is a diffeomorphism, and the right plot shows that the projection is a fold singularity.}
 \label{Fig:trans}
\end{figure}
The behaviour of the fold singularity at switching times can be excluded by an appropriate transversality condition proposed in \cite{Noble:02}. In \cite{Caillau:15}, the transversality condition has been reduced to $\delta (\bar{t}_i-) \delta (\bar{t}_{i}+) > 0$ for  $i=1,2,\cdots,k$ where $\bar{t}_i-$ and $\bar{t}_i+$ denote the instants prior to and after the switching time $\bar{t}_i$, respectively. Moreover, it has been shown in \cite{Caillau:15} that the projection of the family $\mathcal{F}_{\q}$ near the switching time $t_i(\q)$ is a fold singularity if 
\begin{eqnarray}
\delta (\bar{t}_i-) \delta (\bar{t}_{i}+) < 0.
\label{EQ:fold_crossing}
\end{eqnarray}
Consequently, given the extremal $\Gamma(\cdot,\bar{\q})$ on $[0,\bar{t}_f]$, conjugate points may occur not only on a smooth bang arc if $\delta(t_c) = 0$ for a time $t_c\in(\bar{t}_{i},\bar{t}_{i+1})$ but also at a switching time $\bar{t}_i$ once  Eq.~(\ref{EQ:fold_crossing}) is satisfied. 

\begin{condition}
%\begin{eqnarray}
$\delta (\bar{t}_i-) \delta (\bar{t}_{i}+) > 0$ for  $i=1,2,\cdots,k$.
%\label{EQ:transversal_crossing}
%\end{eqnarray}
\label{AS:Transversality}
\end{condition}
\noindent  As is analyzed above, under  {\it Assumptions} \ref{AS:Regular_Hamiltonian} and \ref{AS:Regular_Switching}, the projection of the family $\mathcal{F}_{\q}$ restricted to the whole domain $(0,t_f]\times\mathcal{Q}$ is a diffeomorphism if the subset $\mathcal{Q}$ is small enough and if Conditions \ref{AS:Disconjugacy_bang} and \ref{AS:Transversality} are satisfied. Then, by directly applying the theory of field of extremals (cf. Proposition 17.2 and Theorem 17.2 in \cite{Agrachev:04}), one obtains the following result.
\begin{theorem}
Given the extremal $(\bar{\x}(\cdot),\bar{\p}(\cdot)) = \Gamma(\cdot,\bar{\q})$ on $[0,\bar{t}_f]$ such that  {\it Assumptions} \ref{AS:Regular_Hamiltonian} and \ref{AS:Regular_Switching} are satisfied, let $\sigma>0$ be small enough. Then, if Conditions \ref{AS:Disconjugacy_bang} and \ref{AS:Transversality} are satisfied and if the subset $\mathcal{Q}$ is small enough, every extremal trajectory ${\x}(\cdot,\q)=\Pi(\Gamma(\cdot,\q))$ associated with the extremal control $\u(\cdot,\q)\in\mathcal{U}$ on $[0,t_f]$ for $(t_f,\q)\in[\bar{t}_f-\sigma,\bar{t}_f+\sigma]\times\mathcal{Q}$ realizes a strict minimum cost in Eq.~(\ref{EQ:cost_functional}) among all the admissible controlled trajectories  $\x^*(\cdot)\in\Pi(\mathcal{F}_{{\q}})$ associated with the measurable control $\u^*(\cdot)\in\mathcal{U}$ on $[0,t^*_f]$ for $t^*_f>0$ with the same endpoints $\x^*(0) ={\x}(0,\q)$ and $\x^*(t^*_f) = {\x}({t}_f,\q)$, i.e., there holds
$$\int_0^{t_f} \|\u(t,\q)\| dt < \int_0^{t^*_f} \|\u^*(t) \|dt.$$%\nonumber
\label{CO:cor1}
\end{theorem}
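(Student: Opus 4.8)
The plan is to establish the theorem by the Hamiltonian field-of-extremals (calibration) argument, i.e.\ by exactly the mechanism underlying Proposition 17.2 and Theorem 17.2 of \cite{Agrachev:04}. The essential geometric ingredient is already available: the discussion preceding the statement shows that, under Assumptions~\ref{AS:Regular_Hamiltonian}--\ref{AS:Regular_Switching} together with Conditions~\ref{AS:Disconjugacy_bang}--\ref{AS:Transversality}, the map $(t,\q)\mapsto\x(t,\q)=\Pi(\Gamma(t,\q))$ is, for $\sigma$ and $\mathcal{Q}$ small enough, a diffeomorphism of $(0,\bar{t}_f+\sigma]\times\mathcal{Q}$ onto an open region $\mathcal{V}\subseteq\mathcal{X}$. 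First I would record that this makes $L:=\Gamma([0,\bar{t}_f+\sigma]\times\mathcal{Q})\subset T^*\mathcal{X}$ a Lagrangian submanifold whose projection $\Pi$ is a diffeomorphism onto $\mathcal{V}$ away from the initial focal fiber over $\x_0$; hence each $\x\in\mathcal{V}$ is met by a single extremal of the family and carries a well-defined field costate $\bl(\x)$, smooth on each bang stratum and continuous across the (regular) switching surfaces. Since every member of $\mathcal{F}_{\q}$ lies in the level set $\{H=0\}$, one has $H(\x,\bl(\x))\equiv 0$ on $\mathcal{V}$, and all these extremals emanate from the common focus $\x_0=\x(0,\q)$.

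Next I would build the calibration. Because $L$ is Lagrangian and projects diffeomorphically, the tautological one-form $\p\,d\x$ pulls back to the closed one-form $\theta=\bl(\x)\,d\x$ on $\mathcal{V}$; as $\mathcal{V}$ is contractible (being diffeomorphic to a product), $\theta$ is exact and its line integral between two fixed points of $\mathcal{V}$ is path-independent (Hilbert's invariant integral), extending continuously to the focus $\x_0$. Evaluated along a field extremal $\x(\cdot,\q)$, the identity $\bl\,\dot{\x}=H_0+\rho\,(H_1+1)=H+\rho$ combined with $H\equiv 0$ gives $\bl(\x(t,\q))\,\dot{\x}(t,\q)=\rho(t,\q)$, so that
\begin{eqnarray}
\int_{\x_0}^{\x(t_f,\q)}\theta=\int_0^{t_f}\bl(\x(t,\q))\,\dot{\x}(t,\q)\,dt=\int_0^{t_f}\rho(t,\q)\,dt;\nonumber
\end{eqnarray}
that is, the Hilbert integral equals the fuel cost of the field extremal (recall $\|\u\|=u_{max}\rho$, so comparing the costs in Eq.~\eqnref{EQ:cost_functional} is equivalent to comparing the quantities $\int\|\u\|\,dt$ in the statement).

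Then comes the Weierstrass comparison. Let $\x^*(\cdot)\in\Pi(\mathcal{F}_{\q})=\mathcal{V}$ on $[0,t^*_f]$ be any competitor with control $\u^*(\cdot)$, sharing the endpoints $\x^*(0)=\x_0$ and $\x^*(t^*_f)=\x(t_f,\q)$. Path-independence forces the Hilbert integral along $\x^*$ to equal the same field-extremal cost, whence the cost gap is
\begin{eqnarray}
\int_0^{t^*_f}\rho^*\,dt-\int_0^{t_f}\rho\,dt=\int_0^{t^*_f}\big[\rho^*-\bl(\x^*)\,\dot{\x}^*\big]\,dt=\int_0^{t^*_f}\mathcal{E}\,dt,\nonumber
\end{eqnarray}
where, using $\dot{\x}^*=\f_0(\x^*)+\rho^*\f_1(\x^*,\boldsymbol{\omega}^*)$ together with $H(\x^*,\bl(\x^*))=0$, the integrand equals the Weierstrass excess $\mathcal{E}=H(\x^*,\bl(\x^*))-h(\x^*,\bl(\x^*),-1,\u^*)$. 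The maximum condition~\eqnref{EQ:maximum_condition} yields $\mathcal{E}\geq 0$, hence minimality; and because the maximizer of $h$ over the ball $\mathcal{U}$ is unique almost everywhere (the direction is forced to $\boldsymbol{\omega}=\p_v^T/\|\p_v\|$ by Eq.~\eqnref{EQ:Max_condition2} and $\rho$ is determined off the switching set, which has measure zero), $\int\mathcal{E}\,dt=0$ forces $\u^*=\u$ a.e.; a competitor distinct from the field extremal therefore gives the strict inequality.

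The hard part is not in the calibration but in the diffeomorphism of the projection \emph{across} the switching times, which is exactly what Conditions~\ref{AS:Disconjugacy_bang}--\ref{AS:Transversality} secure (they are what guarantee that $\theta$ remains a well-defined closed form, and the Hilbert integral path-independent, as one crosses each switching surface) and which I would simply invoke. Within the comparison itself the only delicate points are that the argument applies solely to competitors remaining in $\mathcal{V}=\Pi(\mathcal{F}_{\q})$ — precisely the hypothesis — and that the common initial point $\x_0$ is a focus of the field, so one must check that $\theta$ and its associated value function extend continuously there; both are routine. The free final time $t^*_f$ causes no difficulty, since the Hilbert integral is a state-space line integral, manifestly insensitive to the time parametrization and to the value of $t^*_f$. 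Finally, I would note that the conclusion obtained is minimality over the projected family $\Pi(\mathcal{F}_{\q})$ rather than the full strong-local optimality of Definition~\ref{DE:Optimality}; the latter is recovered once $\mathcal{V}$ is identified with a genuine $C^0$-neighborhood of the reference trajectory.
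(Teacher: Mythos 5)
Your proposal is correct and follows essentially the same route as the paper: the paper's own proof consists precisely of noting that Assumptions \ref{AS:Regular_Hamiltonian}--\ref{AS:Regular_Switching} together with Conditions \ref{AS:Disconjugacy_bang}--\ref{AS:Transversality} make the projection of $\mathcal{F}_{\q}$ a diffeomorphism on $(0,\bar{t}_f]\times\mathcal{Q}$ (for $\mathcal{Q}$ small), and then invoking the theory of field of extremals via Proposition 17.2 and Theorem 17.2 of \cite{Agrachev:04}. What you have done is unfold that citation into its underlying mechanism --- Lagrangian submanifold, exactness of the Poincar\'e--Cartan form (reducing to $\p\,d\x$ since $H\equiv 0$), Hilbert invariant integral, and Weierstrass excess --- which matches the intended argument; the only point stated more loosely than it deserves is the strictness step, where ``the switching set has measure zero in $\mathcal{V}$'' does not by itself bound the \emph{time} a competitor spends on it (one should add that, by the regular-crossing hypothesis of Assumption \ref{AS:Regular_Switching}, no admissible velocity with aligned thrust direction can be tangent to the switching surface, so dwelling there is impossible).
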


Note that the final state $\x_f$ of the {\it FOP} is  fixed if $s=n$. Thus, in the case of $s=n$,  Theorem \ref{CO:cor1} indicates that  {\it Conditions} \ref{AS:Disconjugacy_bang}  and \ref{AS:Transversality}  are sufficient to guarantee the reference extremal $(\bar{\x}(\cdot),\bar{\p}(\cdot)) = \Gamma(\cdot,\bar{\q})$ on $[0,\bar{t}_f]$ to be a strict local optimum in the domain $\Pi(\mathcal{F}_{\q})$. Whereas, {\it Conditions} \ref{AS:Disconjugacy_bang}  and \ref{AS:Transversality}  are not sufficient any more if $s<n$ because one has to compare the cost of the reference extremal $\Gamma(\cdot,\bar{\q})$ on $[0,\bar{t}_f]$ with that of every admissible controlled trajectory $\x^*(\cdot)\in\Pi(\mathcal{F}_{\q})$ on $[0,t_f^*]$ not only with the same endpoints but also with the boundary conditions $\x^*(0)  = \bar{\x}(0)$ and $\x^*(t^*_f) \in\mathcal{M}\backslash\{\bar{\x}(\bar{t}_f)\}$ (see, e.g., \cite{Chen:153bp,Agrachev:98,Wood:74}). In next subsection, the sufficient conditions for the case of $s<n$ will be established.

\subsection{Sufficient optimality conditions for $s < n$}

Given a sufficiently small $\sigma>0$, let $\mathcal{N}\subset \mathcal{X}$ be the restriction of $\Pi(\mathcal{F}_{\q})$ on $[\bar{t}_f - \sigma,\bar{t}_f+\sigma]\times\mathcal{Q}$, i.e.,
\begin{eqnarray}
\mathcal{N} = \big\{\x\in\mathcal{X}\ \arrowvert\ \x = \Pi(\Gamma(t_f,\q)),\ t_f\in[\bar{t}_f-\sigma,\bar{t}_f+\sigma],\ \q\in\mathcal{Q}\big\}.\nonumber
\end{eqnarray}
If $\delta(\bar{t}_f)\neq 0$, the mapping $(t_f,\q)\mapsto \x(t_f,\q)$ on the domain $[\bar{t}_f-\sigma,\bar{t}_f+\sigma]\times\mathcal{Q}$ is a diffeomorphism. Thus, the subset $\mathcal{N}$ is an open neighborhood of $\bar{\x}(\bar{t}_f)$ if Conditions \ref{AS:Disconjugacy_bang} is satisfied, which in further implies $\mathcal{N}\cap\mathcal{M}\backslash\{\bar{\x}(\bar{t}_f)\}\neq \varnothing$ for the case of $s<n$.
\begin{definition}
Given the extremal $(\bar{\x}(\cdot),\bar{\p}(\cdot)) = \Gamma(\cdot,\bar{\q})$ on $[0,\bar{t}_f]$ and  a small $\varepsilon > 0$, if $\delta(\bar{t}_f)\neq 0$, we define by $$\y:[-\varepsilon,\varepsilon]\rightarrow\mathcal{M}\cap\mathcal{N},\ \xi\mapsto\y(\xi),$$ a twice continuously differentiable curve on $\mathcal{M}\cap\mathcal{N}$ such that $\y(0) = \bar{\x}(\bar{t}_f)$.
\end{definition}
\noindent Let us define by $\mathcal{O}\subseteq [\bar{t}_f -\sigma,\bar{t}_f + \sigma]\times\mathcal{Q}$ the subset of all $(t_f,\q)\in[\bar{t}_f -\sigma,\bar{t}_f + \sigma]\times\mathcal{Q}$ satisfying $\Pi(\Gamma(t_f,\q))\in\mathcal{M}\cap\mathcal{N}$, i.e.,
\begin{eqnarray}
\mathcal{O} = \big\{(t_f,\q)\in[\bar{t}_f - \sigma,\bar{t}_f + \sigma]\times\mathcal{Q}\ \arrowvert\ \Pi(\Gamma(t_f,\q))\in\mathcal{M}\cap\mathcal{N}\big\}.\nonumber
\end{eqnarray}

\begin{lemma}
Given the extremal $(\bar{\x}(\cdot),\bar{\p}(\cdot)) = \Gamma(\cdot,\bar{\q})$ on $[0,\bar{t}_f]$  such that {\it Assumptions} \ref{AS:Regular_Hamiltonian} and \ref{AS:Regular_Switching} are satisfied, let the subset $\mathcal{Q}$ and  $\sigma>0$ be small enough. Then, if Conditions \ref{AS:Disconjugacy_bang} and \ref{AS:Transversality} are satisfied, for every smooth curve $\y(\cdot)\in\mathcal{M}\cap\mathcal{N}$ on $[-\varepsilon,\varepsilon]$, there exists a smooth path $\xi\mapsto (\tau(\xi),\q(\xi))$ on $[-\varepsilon,\varepsilon]$ in $\mathcal{O}$ such that $\y(\xi) = \Pi(\Gamma(\tau(\xi),\q(\xi)))$.
\label{LE:lemma1}
\end{lemma}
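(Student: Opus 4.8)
The plan is to recognize the lemma as a direct consequence of the inverse function theorem applied to the endpoint map, followed by a pull-back of the curve $\y(\cdot)$. I would introduce the endpoint map $\Psi:[\bar{t}_f-\sigma,\bar{t}_f+\sigma]\times\mathcal{Q}\to\mathcal{X}$ by $\Psi(t_f,\q)=\Pi(\Gamma(t_f,\q))=\x(t_f,\q)$, noting that its domain and target both have dimension $n$. The entire argument then reduces to showing that $\Psi$ is a diffeomorphism onto $\mathcal{N}$ and that its inverse carries $\y(\cdot)$ smoothly into $\mathcal{O}$.

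First I would establish that $\Psi$ is jointly smooth on the box $[\bar{t}_f-\sigma,\bar{t}_f+\sigma]\times\mathcal{Q}$. By \emph{Assumption} \ref{AS:Regular_Switching} each switching time $t_i(\q)$ is a smooth function of $\q$, so the composite flow $\q\mapsto\x(t,\q)$, obtained by gluing the smooth bang-arc flows at the regular switching points, is smooth; shrinking $\sigma$ so that $\bar{t}_f-\sigma>t_k(\q)$ for all $\q\in\mathcal{Q}$ keeps the whole box inside the last bang arc, where $(t_f,\q)\mapsto\x(t_f,\q)$ is smooth in both arguments. Next I would identify the Jacobian of $\Psi$ at $(\bar{t}_f,\bar{\q})$: since its first column is $\partial\x/\partial t_f=\dot{\x}(\bar{t}_f,\bar{\q})$ and its remaining columns are $\partial\x(\bar{t}_f,\bar{\q})/\partial\q$, its determinant is exactly $\delta(\bar{t}_f)$, which is nonzero by \emph{Condition} \ref{AS:Disconjugacy_bang}. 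The inverse function theorem then yields a local diffeomorphism, and after shrinking $\sigma$ and $\mathcal{Q}$ it becomes a diffeomorphism onto the open neighbourhood $\mathcal{N}$ of $\bar{\x}(\bar{t}_f)$, exactly as asserted in the text preceding the lemma.

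With $\Psi^{-1}:\mathcal{N}\to[\bar{t}_f-\sigma,\bar{t}_f+\sigma]\times\mathcal{Q}$ in hand, I would simply set $(\tau(\xi),\q(\xi)):=\Psi^{-1}(\y(\xi))$. This path is smooth as a composition of the smooth curve $\y(\cdot)$ with the smooth inverse $\Psi^{-1}$, and it satisfies $\Pi(\Gamma(\tau(\xi),\q(\xi)))=\Psi(\Psi^{-1}(\y(\xi)))=\y(\xi)$ by construction, which also forces $(\tau(0),\q(0))=(\bar{t}_f,\bar{\q})$ since $\y(0)=\bar{\x}(\bar{t}_f)=\Psi(\bar{t}_f,\bar{\q})$. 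Finally, because $\y(\xi)\in\mathcal{M}\cap\mathcal{N}$ and $\Pi(\Gamma(\tau(\xi),\q(\xi)))=\y(\xi)\in\mathcal{M}\cap\mathcal{N}$, the very definition of $\mathcal{O}$ places the path inside $\mathcal{O}$; equivalently $\mathcal{O}=\Psi^{-1}(\mathcal{M}\cap\mathcal{N})$, so $\Psi^{-1}$ restricts to a diffeomorphism $\mathcal{M}\cap\mathcal{N}\to\mathcal{O}$ and the pulled-back curve automatically lands there.

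The genuine work is concentrated in the second step rather than in the pull-back: the main obstacle is verifying that the endpoint map is globally a diffeomorphism on the box, which requires both the joint smoothness of $\Psi$ across the bang-bang structure — guaranteed by the regularity of the switching points in \emph{Assumption} \ref{AS:Regular_Switching} — and the nonvanishing Jacobian $\delta(\bar{t}_f)\neq 0$ from \emph{Condition} \ref{AS:Disconjugacy_bang}. \emph{Condition} \ref{AS:Transversality}, together with the disconjugacy on each arc, is what earlier ensures the family projects without self-intersection so that $\mathcal{N}$ is covered one-to-one; once that global diffeomorphism is in place, the statement of the lemma follows immediately by composition.
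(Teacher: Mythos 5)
Your proposal is correct and follows essentially the same route as the paper: the paper's own (very terse) proof likewise invokes the inverse function theorem on the endpoint map $(t_f,\q)\mapsto\x(t_f,\q)$, whose Jacobian determinant at $(\bar{t}_f,\bar{\q})$ is $\delta(\bar{t}_f)\neq 0$ by Condition~\ref{AS:Disconjugacy_bang}, and then pulls the curve $\y(\cdot)$ back through the resulting diffeomorphism. You have merely filled in details the paper leaves implicit (joint smoothness across the bang-bang structure and the identification $\mathcal{O}=\Psi^{-1}(\mathcal{M}\cap\mathcal{N})$), which is a faithful elaboration rather than a different argument.
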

\begin{proof}
As is analyzed previously, under the hypotheses of this lemma, the mapping $(t_f,\q)\mapsto\x(t_f,\q)$ restricted to the subset $\mathcal{O}$  is a diffeomorphism. Thus, according to the {\it inverse function theorem}, the lemma is proved. 
\end{proof}
\noindent Hereafter, we denote by $(\tau(\cdot),\q(\cdot))$ on $[-\varepsilon,\varepsilon]$ the smooth path on the subset $\mathcal{O}$ such that $\y(\xi) = \Pi(\Gamma(\tau(\xi),\q(\xi)))$ for $\xi\in[-\varepsilon,\varepsilon]$.  Let $\u(\cdot,\q(\xi))$ be the optimal control function of the extremal $\Gamma(\cdot,\q(\xi))$ on $[0,\tau(\xi)]$, and denote by $$J:[-\varepsilon,\varepsilon]\rightarrow \mathbb{R},\ \xi\mapsto J(\xi),$$ the cost functional of the extremal $\Gamma(\cdot,\q(\xi))$ on $[0,\tau(\xi)]$, i.e.,
\begin{eqnarray}
J(\xi) = \int_0^{\tau(\xi)}\| \u(t,\q(\xi))\| dt.
\label{EQ:definition_J_xi}
\end{eqnarray}
\begin{proposition}
Given the extremal $(\bar{\x}(\cdot),\bar{\p}(\cdot)) = \Gamma(\cdot,\bar{\q})$ on $[0,\bar{t}_f]$, let  {\it Assumptions} \ref{AS:Regular_Hamiltonian} and \ref{AS:Regular_Switching} be satisfied. Then, if Conditions \ref{AS:Disconjugacy_bang} and \ref{AS:Transversality} are satisfied, the extremal trajectory $\bar{\x}(\cdot)$ on $[0,\bar{t}_f]$ realizes a strict strong-local optimum (cf. Definition \ref{DE:Optimality}) if and only if there holds
\begin{eqnarray}
J(\xi) > J(0),\ \xi\in[-\varepsilon,\varepsilon]\backslash\{0\},
\label{EQ:lemma_1}
\end{eqnarray}
for every smooth curve $\y(\cdot)\in\mathcal{M}\cap\mathcal{N}$ on $[-\varepsilon,\varepsilon]$. 
\label{PR:compare}
\end{proposition}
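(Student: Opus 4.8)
The plan is to prove both implications by reducing an arbitrary admissible competitor to a member of the field of extremals with the \emph{same} endpoint, so that the comparison collapses onto the one-parameter family indexed by $\xi$. Under \emph{Assumptions} \ref{AS:Regular_Hamiltonian} and \ref{AS:Regular_Switching} together with \emph{Conditions} \ref{AS:Disconjugacy_bang} and \ref{AS:Transversality}, the projection $\Pi$ restricted to $(0,t_f]\times\mathcal{Q}$ is a diffeomorphism, so the family $\mathcal{F}_{\q}$ forms a genuine field of extremals and, by Condition \ref{AS:Disconjugacy_bang}, $\mathcal{N}$ is an open neighborhood of $\bar{\x}(\bar{t}_f)$; this is exactly the setting in which Theorem \ref{CO:cor1} applies. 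The key observation is that, by Theorem \ref{CO:cor1}, the only way a competitor can beat the reference cost is to exploit the freedom left in the final endpoint, i.e. to terminate at a point of $\mathcal{M}\cap\mathcal{N}$ other than $\bar{\x}(\bar{t}_f)$; and every such endpoint is reached by some smooth curve $\y(\cdot)$, along which the extremal cost is precisely $J(\xi)$ via Lemma \ref{LE:lemma1}. Hence strong-local optimality should be equivalent to $J$ having a strict minimum at $\xi=0$.

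For the sufficiency direction I would fix a sufficiently small $C^0$-neighborhood $\mathcal{W}_{\x}\subseteq\mathcal{N}$ of $\bar{\x}(\cdot)$ and take an arbitrary admissible controlled trajectory $\x^*(\cdot)$ on $[0,t_f^*]$ with $\x^*(\cdot)\in\mathcal{W}_{\x}$, $\x^*(0)=\bar{\x}(0)$, $t_f^*\in[\bar{t}_f-\sigma,\bar{t}_f+\sigma]$, and $\x^*(t_f^*)\in\mathcal{M}$. Since $\mathcal{W}_{\x}\subseteq\mathcal{N}$, its endpoint lies in $\mathcal{M}\cap\mathcal{N}$, so I can choose a smooth curve $\y(\cdot)$ on $[-\varepsilon,\varepsilon]$ with $\y(0)=\bar{\x}(\bar{t}_f)$ and $\y(\bar{\xi})=\x^*(t_f^*)$ for some $\bar{\xi}$. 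Lemma \ref{LE:lemma1} furnishes the extremal $\Gamma(\cdot,\q(\bar{\xi}))$ on $[0,\tau(\bar{\xi})]$ sharing the endpoints $\bar{\x}(0)$ and $\x^*(t_f^*)$ with $\x^*(\cdot)$, whence Theorem \ref{CO:cor1} gives $\int_0^{t_f^*}\|\u^*(t)\|\,dt \ge J(\bar{\xi})$, with equality only if $\x^*(\cdot)$ coincides with that extremal. Combining with the hypothesis $J(\bar{\xi})\ge J(0)$ yields the reference cost as a lower bound, and tracing the equality cases (the first is equality only when $\x^*$ is the extremal, the second only when $\bar{\xi}=0$) forces strictness unless $\x^*(\cdot)=\bar{\x}(\cdot)$, which is precisely Definition \ref{DE:Optimality}.

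For the necessity direction I would run the argument in reverse: for each $\xi\in[-\varepsilon,\varepsilon]$ the trajectory $\Pi(\Gamma(\cdot,\q(\xi)))$ on $[0,\tau(\xi)]$ is itself admissible, starts at $\bar{\x}(0)$, ends at $\y(\xi)\in\mathcal{M}$, has final time $\tau(\xi)\in[\bar{t}_f-\sigma,\bar{t}_f+\sigma]$, and for $\varepsilon$ small enough lies in $\mathcal{W}_{\x}$; its cost is $J(\xi)$. If $\bar{\x}(\cdot)$ is a strict strong-local optimum, then this competitor must cost strictly more whenever it is distinct from $\bar{\x}(\cdot)$, i.e. whenever $\xi\neq0$ (the endpoints $\y(\xi)$ and $\bar{\x}(\bar{t}_f)$ then differ), which gives $J(\xi)>J(0)$ for all $\xi\neq0$ and for every curve $\y(\cdot)$.

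I expect the main obstacle to be the bookkeeping that makes the sufficiency reduction airtight: one must verify that the neighborhoods can be chosen so that (i) every $C^0$-close competitor indeed terminates inside $\mathcal{M}\cap\mathcal{N}$ and can be joined to $\bar{\x}(\bar{t}_f)$ by an admissible curve $\y(\cdot)$ to which Lemma \ref{LE:lemma1} applies, and (ii) the two separate equality cases---equality in Theorem \ref{CO:cor1} and equality $J(\bar{\xi})=J(0)$---cannot occur simultaneously for a competitor other than the reference trajectory itself. The second point is what upgrades the weak inequality to a \emph{strict} strong-local optimum, and it must be handled with care, since it is the place where the ``only if'' and ``if'' parts of the equivalence truly meet.
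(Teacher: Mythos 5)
Your necessity argument and your overall strategy (reduce every competitor to a member of the $\q$-parameterized family with the same endpoint, then compare along the curve $\y(\cdot)$) coincide with the paper's proof, but your sufficiency direction has a genuine gap at exactly the point you flagged as ``bookkeeping.'' You apply Theorem \ref{CO:cor1} to an arbitrary admissible trajectory $\x^*(\cdot)$ lying in a $C^0$-neighborhood $\mathcal{W}_{\x}$ of $\bar{\x}(\cdot)$, whereas that theorem only compares costs against competitors $\x^*(\cdot)\in\Pi(\mathcal{F}_{\q})$, i.e.\ trajectories whose image lies in the projection of the field. No $C^0$-neighborhood of $\bar{\x}(\cdot)$ is contained in $\Pi(\mathcal{F}_{\q})$: since every extremal of the family emanates from the same initial point $\x_0$, the set $\Pi(\mathcal{F}_{\q})$ is a fan whose width around $\bar{\x}(t)$ shrinks to zero as $t\to 0^{+}$, so a competitor starting at $\x_0$ and staying uniformly close to $\bar{\x}(\cdot)$ can still exit the fan near $t=0$; the paper states this obstruction explicitly (``the domain $\Pi(\mathcal{F}_{\q})$ is not a $C^0$-topology neighborhood of the extremal trajectory''). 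Your inclusion ``$\mathcal{W}_{\x}\subseteq\mathcal{N}$'' does not repair this and is in fact a category confusion: $\mathcal{N}$ is a neighborhood of the \emph{final point} $\bar{\x}(\bar{t}_f)$ only, so it can guarantee that competitor endpoints lie in $\mathcal{M}\cap\mathcal{N}$, but not that whole competitor trajectories lie in the field.

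The paper closes precisely this gap in two stages: first it proves, as you do, that Eq.~(\ref{EQ:lemma_1}) together with Theorem \ref{CO:cor1} yields strict optimality of $\bar{\x}(\cdot)$ \emph{within the domain} $\Pi(\mathcal{F}_{\q})$; it then invokes a separate argument --- Agrachev's construction of a perturbed Lagrangian submanifold (cf.\ \cite{Agrachev:04} or Appendix A of \cite{Caillau:15}) --- to conclude that optimality relative to $\Pi(\mathcal{F}_{\q})$ implies strict strong-local optimality in the $C^0$-topology of Definition \ref{DE:Optimality}. Without this step (or a substitute, e.g.\ enlarging the field by flowing a Lagrangian submanifold transverse to the fiber $T^*_{\x_0}\mathcal{X}$), your argument only establishes optimality against the restricted class of competitors contained in the field, which is strictly weaker than the claim. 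Your necessity direction --- each $\Pi(\Gamma(\cdot,\q(\xi)))$ is itself an admissible, $C^0$-close competitor ending on $\mathcal{M}$, so $J(\xi)\le J(0)$ for some $\xi\neq 0$ contradicts strict strong-local optimality --- is essentially the paper's own and is sound.
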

\begin{proof}
Let us first prove that, under the hypotheses of this proposition, Eq.~(\ref{EQ:lemma_1}) is a sufficient condition for the strict strong-local optimality of the extremal trajectory $\bar{\x}(\cdot)$ on $[0,\bar{t}_f]$. Denote by $\x^*(\cdot)\in \Pi(\mathcal{F}_{\q})$  an admissible controlled trajectory associated with the measurable control $\u^*(\cdot)$ on  $[0,t^*_f]$ for $t^*_f\in[\bar{t}_f - \sigma,\bar{t}_f + \sigma]$ such that the boundary conditions $\x^*(0) = \x_0$ and $\x^*(t^*_f)\in\mathcal{M}\cap\mathcal{N}\backslash\{\bar{\x}(\bar{t}_f)\}$ are satisfied. Note that for every final point $\x^*(t_f^*)\in\mathcal{M}\cap\mathcal{N}\backslash\{\bar{\x}(\bar{t}_f)\}$, there must exist some smooth curves $\y(\cdot)\in\mathcal{M}\cap\mathcal{N}$ on $[-\varepsilon,\varepsilon]$ such that $\y({\xi}) = \x^*(t^*_f)$ for a ${\xi}\in[-\varepsilon,\varepsilon]\backslash\{0\}$. Thus, according to Lemma \ref{LE:lemma1}, there exist some correspondingly smooth paths $(\tau(\cdot),\q(\cdot))\in\mathcal{O}$ on $[-\varepsilon,\varepsilon]$ such that $\x_0 = \Pi(\Gamma(0,\q(\xi)))$ and $\x^*(t^*_f) = \Pi(\Gamma(\tau({\xi}),\q({\xi})))$. Then, according to Theorem \ref{CO:cor1}, there holds
\begin{eqnarray}
J({\xi})< \int_0^{t^*_f} \|\u^*(t)\|dt.\nonumber
%\label{EQ:lemma_2}
\end{eqnarray}
Substituting this equation into Eq.~(\ref{EQ:lemma_1}) implies
\begin{eqnarray}
J(0)< \int_0^{t^*_f} \|\u^*(t)\|dt,
\end{eqnarray}
i.e., the extremal trajectory $\bar{\x}(\cdot)$ on $[0,\bar{t}_f]$ realizes a strict minimum cost among every admissible controlled trajectory $\x^*(\cdot)\in\Pi(\mathcal{F}_{\q})$ on $[0,t^*_f]$ with the boundary conditions $\x^*(0)=\x_0$ and $\x^*(t^*_f) \in\mathcal{M}\cap\mathcal{N}\backslash\{\bar{\x}(\bar{t}_f)\}$. Note that the domain $\Pi(\mathcal{F}_{\q})$  is not a $C^0$-topology neighborhood of the extremal trajectory $\bar{\x}(\cdot)$ on $[0,\bar{t}_f]$ since the initial state of each extremal $\Gamma(\cdot,\q)$ is the same. According to Agrachev's approach  in \cite{Agrachev:04} or Appendix A in \cite{Caillau:15}, one can construct a perturbed Lagrangian submanifold to prove that to be optimal in the domain $\Pi(\mathcal{F}_{\q})$ is sufficient for the strict strong-local optimality in $C^0$-topology.

Next, let us prove that Eq.~(\ref{EQ:lemma_1})  is a necessary condition as well. Note that, for every  $(t_f,\q)\in\mathcal{O}$, the extremal trajectory $\Pi(\Gamma(\cdot,\q))$ on $[0,t_f]$  is an admissible controlled trajectory  satisfying the boundary conditions  $\Pi(\Gamma(0,\q(\xi))) = \x_0$ and $\Pi(\Gamma(t_f,\q)) \in\mathcal{M}\cap\mathcal{N}$. Thus, if there exists a $(t_f,\q)\in\mathcal{O}$ such that Eq.~(\ref{EQ:lemma_1}) is not satisfied, the extremal trajectory $\bar{\x}(\cdot)$ on $[0,\bar{t}_f]$ is not locally optimal in the domain $\Pi(\mathcal{F}_{\q})$ any more, which proves the proposition.
\end{proof}

\noindent According to Eq.~(\ref{EQ:Hamiltonian}), we can rewrite $J(\xi)$ in Eq.~(\ref{EQ:definition_J_xi}) as
\begin{eqnarray}
J(\xi) = \int_{0}^{\tau(\xi)}\big[\p(t,\q(\xi)){\dot{\x}}(t,\q(\xi)) - H(\x(t,\q(\xi)),\p(t,\q(\xi)))\big]dt.
\end{eqnarray}
Let us define the path $\xi\mapsto \boldsymbol{\lambda}(\xi)$ in $T^*_{\y(\xi)}\mathcal{X}$ in such a way that $(\y(\xi),\boldsymbol{\lambda}(\xi)) = \Gamma(\tau(\xi),\q(\xi))$ for $\xi\in[-\varepsilon,\varepsilon]$. Then, for every $\xi\in[-\varepsilon,\varepsilon]$, the four paths $(\x(0,\q(\cdot)),\p(0,\q(\cdot)))$ on $[0,\xi]$, $\Gamma(\cdot,\q(\xi))$ on $[0,\tau(\xi)]$, $\Gamma(\cdot,\q(0))$ on $[0,\tau(0)]$, and $(\y(\cdot),\boldsymbol{\lambda}(\cdot))$ on $[0,\xi]$ form a closed curve on the family $\mathcal{F}_{\q}$. Since the integrant of the Poincar\'e-Cartan form $\p d\x - Hdt$ is exact on the family $\mathcal{F}_{\q}$ (cf. Proposition 17.2 in \cite{Agrachev:04}), it follows that
\begin{eqnarray}
&&J(0) +  \int_{0}^{\xi}\boldsymbol{\lambda}(\eta){\y^{\prime}}(\eta) - H(\y(\eta),\boldsymbol{\lambda}(\eta)){\tau^{\prime}}(\eta)d\eta \nonumber\\
&=& J(\xi) +  \int_{0}^{\xi}{\p_0}(0,\q(\eta)){\frac{d\x}{d\eta}}(0,\q(\eta)) - H(\x(0,\q(\eta)),\p(0,\q(\eta)))\frac{d t_0}{d\eta}d\eta,\nonumber
\end{eqnarray}
where the superscript ``~$\prime$~" denotes the derivative with respect to $\xi$. Since $\x(0,\q(\xi)) = \x_0$ and $t_0 = 0$, it follows that $J(\xi)$ can be further rewritten as
\begin{eqnarray}
 J(\xi)= J(0) + \int_{0}^{\xi}\boldsymbol{\lambda}(\eta){\y^{\prime}}(\eta) - H(\y(\eta),\boldsymbol{\lambda}(\eta)){\tau^{\prime}}(\eta)d\eta.\nonumber
\end{eqnarray} 
Note that $H(\x,\p) = 0$ for every point $(\x,\p)\in\mathcal{F}_{\q}$, we obtain
\begin{eqnarray}
J(\xi) = J(0) + \int_{0}^{\xi}\boldsymbol{\lambda}(\eta){\y^{\prime}}(\eta) d\eta.
\end{eqnarray} 
Hence, taking derivative of $J(\xi)$ with respect to $\xi$ leads to 
\begin{eqnarray}
J^{\prime}(\xi) = \boldsymbol{\lambda}(\xi)\y^{\prime}(\xi), \ \xi\in[-\varepsilon,\varepsilon].
\label{EQ:dJdxi}
\end{eqnarray}
Note that $\boldsymbol{\lambda}(0) = \bar{\p}(\bar{t}_f)$. According to the transversality condition in Eq.~(\ref{EQ:Transversality}), one has $\boldsymbol{\lambda}(0) \perp \y^{\prime}(0)$. Hence, the equation $J^{\prime}(0) = 0$ is satisfied for every smooth curve $\y(\cdot)\in\mathcal{M}\cap\mathcal{N}$ on $[-\varepsilon,\varepsilon]$. According to Proposition \ref{PR:compare}, we immediately obtain the following result.
\begin{corollary}
Given the extremal $(\bar{\x}(\cdot),\bar{\p}(\cdot)) = \Gamma(\cdot,\bar{\q})$ on $[0,\bar{t}_f]$ such that  {\it Assumptions} \ref{AS:Regular_Hamiltonian} and \ref{AS:Regular_Switching} as well as Conditions \ref{AS:Disconjugacy_bang} and \ref{AS:Transversality} are satisfied, let the subset $\mathcal{O}$ be small enough. Then, if $\varepsilon > 0$ is small enough, for every smooth curve $\y(\cdot)\in\mathcal{M}\cap\mathcal{N}$ on $[-\varepsilon,\varepsilon]$,
\begin{description}
\item $1)$ the strict inequality $J^{\prime\prime}(0) > 0$ is sufficient to ensure the extremal trajectory $\bar{\x}(\cdot)$ on $[0,\bar{t}_f]$ to be a strict strong-local optimum; and
\item $2)$ the inequality $J^{\prime\prime}(0) \geq 0$ is a necessary condition for the strict strong-local optimality of the extremal trajectory $\bar{\x}(\cdot)$ on $[0,\bar{t}_f]$.
\end{description}
\label{CO:necessary_sufficient}
\end{corollary}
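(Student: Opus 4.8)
The plan is to read the corollary as the classical second-order test applied to the scalar function $J(\cdot)$ at the critical point $\xi = 0$, using Proposition \ref{PR:compare} as the bridge between the behaviour of $J$ and strict strong-local optimality. The essential fact already established is that $J^{\prime}(0) = 0$ for every admissible curve $\y(\cdot)$: this was obtained from Eq.~(\ref{EQ:dJdxi}) together with the transversality relation $\boldsymbol{\lambda}(0) = \bar{\p}(\bar{t}_f) \perp \y^{\prime}(0)$. Consequently $\bar{\x}(\bar{t}_f)$ is a critical point of the cost-to-reach function restricted to $\mathcal{M}\cap\mathcal{N}$, and the two parts of the corollary are precisely the sufficient and necessary second-order conditions for a strict local minimum of a smooth function at such a critical point.

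For the sufficiency in part $1)$ I would assume $J^{\prime\prime}(0) > 0$ for every smooth curve $\y(\cdot)$ and invoke Taylor's formula: since $J^{\prime}(0) = 0$, one has $J(\xi) = J(0) + \tfrac{1}{2}J^{\prime\prime}(0)\,\xi^2 + o(\xi^2)$, so for $\varepsilon$ small enough the strict inequality $J(\xi) > J(0)$ of Eq.~(\ref{EQ:lemma_1}) holds on $[-\varepsilon,\varepsilon]\backslash\{0\}$, and Proposition \ref{PR:compare} then delivers the strict strong-local optimality of $\bar{\x}(\cdot)$. For the necessity in part $2)$ I would argue by contraposition: if $J^{\prime\prime}(0) < 0$ along some admissible curve, the same expansion forces $J(\xi) < J(0)$ for small $\xi \neq 0$, violating Eq.~(\ref{EQ:lemma_1}) and hence, by Proposition \ref{PR:compare}, contradicting strict strong-local optimality; therefore $J^{\prime\prime}(0) \geq 0$ is necessary. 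This direction is purely per-curve and needs no additional uniformity.

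The one point requiring care, and the main obstacle, is the uniformity over curves in the sufficiency direction: Proposition \ref{PR:compare} requires Eq.~(\ref{EQ:lemma_1}) to hold for every smooth curve simultaneously on a common interval, whereas the naive Taylor argument produces a curve-dependent threshold for $\varepsilon$. To close this gap I would use the standard fact that, at a critical point of a smooth function restricted to a submanifold, the second derivative taken along any curve through that point depends only on the curve's initial velocity. Concretely, since $J^{\prime}(0) = 0$ holds identically in $\y(\cdot)$, the quantity $J^{\prime\prime}(0)$ coincides with the intrinsic Hessian quadratic form of the cost-to-reach function on $\mathcal{M}\cap\mathcal{N}$ evaluated at the tangent vector $\y^{\prime}(0)\in T_{\bar{\x}(\bar{t}_f)}(\mathcal{M}\cap\mathcal{N})$, and is in particular independent of $\y^{\prime\prime}(0)$. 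Positive definiteness of this quadratic form on the compact unit sphere of $T_{\bar{\x}(\bar{t}_f)}(\mathcal{M}\cap\mathcal{N})$ then furnishes a uniform positive lower bound, which upgrades the pointwise Taylor estimate to a uniform strict minimum on a full neighbourhood and hence yields Eq.~(\ref{EQ:lemma_1}) for all curves on a single interval $[-\varepsilon,\varepsilon]$. With this uniformity secured, the two second-order tests combine with Proposition \ref{PR:compare} to complete the proof.
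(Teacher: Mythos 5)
Your proposal is correct and follows essentially the same route as the paper, which presents the corollary as an immediate consequence of Proposition~\ref{PR:compare} once $J^{\prime}(0)=0$ is known: a second-order Taylor expansion of $J$ at the critical point $\xi=0$ gives Eq.~(\ref{EQ:lemma_1}) when $J^{\prime\prime}(0)>0$, and contraposition gives the necessity of $J^{\prime\prime}(0)\geq 0$. The uniformity-over-curves issue you flag is real but is resolved exactly as you suggest, and indeed this is what the paper does immediately after the corollary by deriving Eq.~(\ref{EQ:d2Jdxi201}), which exhibits $J^{\prime\prime}(0)$ as a fixed quadratic form in the initial velocity $\y^{\prime}(0)$ alone, independent of $\y^{\prime\prime}(0)$.
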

\noindent Up to now, the inequality $J^{\prime\prime}(0) > 0$ still cannot be straightforwardly verified. In next paragraph, the numerically verifiable condition for $J^{\prime\prime}(0) > 0$ will be established.

Directly differentiating Eq.~(\ref{EQ:dJdxi}) with respect to $\xi$ yields
\begin{eqnarray}
J^{\prime\prime}(\xi) =  { \boldsymbol{\lambda}^{\prime}}(\xi){\y^{\prime}}(\xi) +  \boldsymbol{\lambda}(\xi){\y^{\prime\prime}}(\xi).
\label{EQ:d2Jdxi2}
\end{eqnarray}
Then, differentiating $\phi(\y(\xi)) = 0$ with respect to $\xi$ yields
\begin{eqnarray}
\frac{d  }{d\xi}\phi(\y(\xi))  &=&{{ \nabla \phi(\y(\xi))}} {\y^{\prime}(\xi)} = 0,\label{EQ:dphidxi}\nonumber\\
\frac{d^2  }{d\xi^2}\phi(\y(\xi))  &=& 
\left[\begin{array}{c}
\left(\y^{\prime}(\xi)\right)^T{\nabla^2 \phi_1(\y(\xi))}{\y^{\prime}(\xi)}\\
\vdots\\
\left({\y^{\prime}(\xi)}\right)^T{\nabla^2 \phi_s(\y(\xi))}{\y^{\prime}(\xi)}
\end{array}\right]
%\frac{d\y^T(\xi)}{d\xi}\frac{ d^2\phi(\y(\xi))}{d\y^Td\y}\frac{d\y(\xi)}{d\xi}
 + {\nabla \phi(\y(\xi))}{\y^{\prime\prime}(\xi)}= 0,
\label{EQ:dphi2dxi2}
\end{eqnarray}
where $\phi_i(\y(\xi))\in\mathbb{R}$ for $i=1,2,\cdots,s$ are the elements of the vector-valued function $\phi(\cdot)$. Let $\bar{\boldsymbol{\nu}}=[\bar{\nu}_1,\bar{\nu}_2,\cdots,\bar{\nu}_s]\in(\mathbb{R}^s)^*$ be the vector of the Lagrangian multipliers of the reference extremal $(\bar{\x}(\cdot),\bar{\p}(\cdot)) = \Gamma(\cdot,\bar{\q})$ on $[0,\bar{t}_f]$, i.e., $\bar{\p}(\bar{t}_f) =\bar{\boldsymbol{\nu}}\nabla\phi(\bar{\x}(\bar{t}_f))$; we immediately get $\boldsymbol{\lambda}(0) = \bar{\boldsymbol{\nu}}{\nabla \phi(\y(0))}$ because $\boldsymbol{\lambda}(0) = \bar{\p}(\bar{t}_f)$ and $\y(0) = \bar{\x}(\bar{t}_f)$. Multiplying $\bar{\boldsymbol{\nu}}$ on both sides of Eq.~(\ref{EQ:dphi2dxi2}) leads to
\begin{eqnarray}
\bar{\boldsymbol{\nu}}\frac{d^2  }{d\xi^2}\phi(\y(0))  &=&  \boldsymbol{\lambda}(0){\y^{\prime\prime}(0)} +
%\frac{d\y^T(0)}{d\xi} \bar{\boldsymbol{\nu}}^T\frac{ d^2\phi(\y(0))}{d\y^Td\y}\frac{d\y(0)}{d\xi}
\sum_{i=1}^{s}\bar{\nu}_i[{\y^{\prime}(0)}]^T{\nabla^2 \phi_i(\y(0))}{\y^{\prime}(0)}
\nonumber\\
% &=&  \boldsymbol{\lambda}^T(0)\frac{d^2\y(0)}{d\xi^2} +
%\sum_{i=1}^{l}\frac{d\y^T(0)}{d\xi} \left[\bar{\nu}_i \frac{d^2 \phi_i(\y(0))}{d\y^Td\y}\right]\frac{d\y(0)}{d\xi}
%\nonumber\\
&=&\boldsymbol{\lambda}(0){\y^{\prime\prime}(0)}  +[{\y^{\prime}(0)}]^T
\left[\sum_{i=1}^{s} \bar{\nu}_i {\nabla^2 \phi_i(\y(0))}\right]{\y^{\prime}(0)} = 0.
\nonumber
\end{eqnarray}
Substituting this equation into Eq.~(\ref{EQ:d2Jdxi2}) yields
\begin{eqnarray}
{ J^{\prime\prime}}(0) ={\boldsymbol{\lambda}^{\prime}(0)} {\y^{\prime}(0)}-[{\y^{\prime}(0)}]^T
\left[\sum_{i=1}^{s} \bar{\nu}_i {\nabla^2 \phi_i(\y(0))}\right]{\y^{\prime}(0)}.
\label{EQ:d2Jdxi20}
\end{eqnarray}
Note that $\y(\xi) = \x(\tau(\xi),\q(\xi))$ and $\boldsymbol{\lambda}(\xi) = \p(\tau(\xi),\q(\xi))$ for every $\xi\in[-\varepsilon,\varepsilon]$. Then, taking their derivatives with respect to $\xi$ leads to 
\begin{eqnarray}
\y^{\prime}(\xi) &=& {\nabla \x(\tau(\xi),\q(\xi))}{(\tau^{\prime}(\xi),\q^{\prime}(\xi))^T},\label{EQ:dydxi}\\
\left[\boldsymbol{\lambda}^{\prime}(\xi)\right]^T &=& {\nabla \p^T(\tau(\xi),\q(\xi))} {\left(\tau^{\prime }(\xi),\q^{\prime}(\xi)\right)^T}.\label{EQ:dlambdadxi}
\end{eqnarray}
Substituting Eq.~(\ref{EQ:dydxi}) and Eq.~(\ref{EQ:dlambdadxi}) into Eq.~(\ref{EQ:d2Jdxi20}) and considering that the matrix ${\nabla \x(\tau(\xi),\q(\xi))}$ is nonsingular under Condition \ref{AS:Disconjugacy_bang}, we obtain that the equation
\begin{eqnarray}
{ J^{\prime\prime}}(0) = \left[{\y^{\prime}(0)}\right]^T\Big\{{\nabla \p^T(\bar{t}_f,\bar{\q})}\left[{\nabla \x(\bar{t}_f,\bar{\q})} \right]^{-1} - \sum_{i=1}^{s} \bar{\nu}_i {\nabla^2 \phi_i(\y(0))} \Big\}{\y^{\prime}(0)},
\label{EQ:d2Jdxi201}
\end{eqnarray}
is satisfied for every smooth curve $\y(\cdot)\in\mathcal{M}\cap\mathcal{N}$ on $[-\varepsilon,\varepsilon]$.
\begin{definition}
Let $\boldsymbol{T}\in\mathbb{R}^{(n)\times (n-s)}$ be a full rank matrix such that each of its column vector is orthogonal to the normal vector of the submanifold $\mathcal{M}$ at $\bar{\x}(t_f)$, i.e., ${\nabla  \phi(\bar{\x}(t_f))}\boldsymbol{T} = 0$.
\label{DE:C}
\end{definition}
\noindent Note that the column vectors of $\boldsymbol{T}$ form a basis of the tangent space $T_{\bar{\x}(\bar{t}_f)}\mathcal{M}$. Since the vector ${\y^{\prime}(0)}$ is tangent to the manifold $\mathcal{M}$ at $\bar{\x}(\bar{t}_f)$, for every curve $\y(\cdot)\in\mathcal{M}\cap\mathcal{N}$ on $[-\varepsilon,\varepsilon]$, there exists a vector $\boldsymbol{\zeta}\in\mathbb{R}^{n-s}$ such that ${\y^{\prime}(0)}= \boldsymbol{T}\boldsymbol{\zeta}$. Then, substituting ${\y^{\prime}(0)}= \boldsymbol{T}\boldsymbol{\zeta}$ into Eq.~(\ref{EQ:d2Jdxi201}), we obtain
\begin{eqnarray}
J^{\prime\prime}(0) = \boldsymbol{\zeta}^T\boldsymbol{T}^T\Big\{{\nabla \p^T(\bar{t}_f,\bar{\q})}\big[{\nabla \x(\bar{t}_f,\bar{\q})} \big]^{-1} - \sum_{i=1}^{s} \bar{\nu}_i {\nabla^2 \phi_i(\y(0))} \Big\}\boldsymbol{T}\boldsymbol{\zeta}.\nonumber
%\label{EQ:d2Jdxi21}
\end{eqnarray}
Since the vector $\boldsymbol{\zeta}$ can take arbitrary values in $\mathbb{R}^{n-s}$, it follows that the strict inequality $J^{\prime\prime}(0)  > 0$ is satisfied if and only if there holds
\begin{eqnarray}
\boldsymbol{T}^T\Big\{{\nabla \p^T(\bar{t}_f,\bar{\q})}\left[{\nabla \x(\bar{t}_f,\bar{\q})} \right]^{-1} - \sum_{i=1}^{s} \bar{\nu}_i {\nabla^2 \phi_i(\y(0))} \Big\}\boldsymbol{T} \succ 0.
 \label{EQ:numerical_second_transversality}
\end{eqnarray}
This equation generalizes the second-order condition for fixed-time problems in \cite{Chen:153bp}  to the problems with free final time.  
\begin{condition}
Let Eq.~(\ref{EQ:numerical_second_transversality}) be satisfied at the final point of the reference  extremal $\Gamma(\cdot,\bar{\q})$ on $[0,\bar{t}_f]$.
\label{AS:terminal_condition}
\end{condition}
\noindent Then, as a result of Corollary \ref{CO:necessary_sufficient}, we eventually obtain the following theorem.
\begin{theorem}
Given the extremal $(\bar{\x}(\cdot),\bar{\p}(\cdot)) = \Gamma(\cdot,\bar{\q})$ on $[0,\bar{t}_f]$, let {\it Assumptions} \ref{AS:Regular_Hamiltonian} and \ref{AS:Regular_Switching} be satisfied. Then, if Conditions \ref{AS:Disconjugacy_bang}, \ref{AS:Transversality}, and \ref{AS:terminal_condition} are satisfied, the extremal trajectory $\bar{\x}(\cdot)$ on $[0,\bar{t}_f]$ realizes a strict strong-local optimum.\label{CO:cor3}
\end{theorem}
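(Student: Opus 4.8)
The plan is to recognize that this theorem is the capstone assembling the machinery built in the preceding results, so the proof requires no new analytic estimate—only a careful chaining of Corollary \ref{CO:necessary_sufficient} with the computation that produced Eq.~(\ref{EQ:numerical_second_transversality}). First I would invoke Corollary \ref{CO:necessary_sufficient}: under \emph{Assumptions} \ref{AS:Regular_Hamiltonian}, \ref{AS:Regular_Switching} and \emph{Conditions} \ref{AS:Disconjugacy_bang}, \ref{AS:Transversality}, the strict inequality $J^{\prime\prime}(0) > 0$ for every smooth curve $\y(\cdot)\in\mathcal{M}\cap\mathcal{N}$ on $[-\varepsilon,\varepsilon]$ is already known to be sufficient for $\bar{\x}(\cdot)$ to be a strict strong-local optimum. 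Hence the entire burden of the proof reduces to showing that \emph{Condition} \ref{AS:terminal_condition} forces $J^{\prime\prime}(0) > 0$ along every admissible curve.

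Second, I would reuse the closed-form expression $J^{\prime\prime}(0)$ in Eq.~(\ref{EQ:d2Jdxi201}), obtained by eliminating the $\y^{\prime\prime}(0)$-term through the twice-differentiated constraint $\phi(\y(\xi)) = 0$ together with the endpoint relation $\boldsymbol{\lambda}(0) = \bar{\boldsymbol{\nu}}\,{\nabla\phi(\y(0))}$. The structural fact to highlight is that, after this elimination, $J^{\prime\prime}(0)$ depends only on the tangent vector $\y^{\prime}(0)$ and not on the curvature of $\y(\cdot)$. This is the single point deserving care: a priori the second variation could have retained a dependence on $\y^{\prime\prime}(0)$, and it is precisely the first-order transversality condition from Eq.~(\ref{EQ:Transversality}) that cancels that dependence. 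Everything downstream rests on this reduction, so I would treat it as the main (though already-discharged) obstacle.

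Third, using Definition \ref{DE:C}, since $\y^{\prime}(0)$ is tangent to $\mathcal{M}$ at $\bar{\x}(\bar{t}_f)$ I can write $\y^{\prime}(0) = \boldsymbol{T}\boldsymbol{\zeta}$, where $\boldsymbol{\zeta}$ ranges over all of $\mathbb{R}^{n-s}$ as $\y(\cdot)$ ranges over all admissible smooth curves. Substituting into Eq.~(\ref{EQ:d2Jdxi201}) recasts the requirement ``$J^{\prime\prime}(0) > 0$ for every curve'' as the positive-definiteness of the compressed matrix, which is exactly Eq.~(\ref{EQ:numerical_second_transversality}), i.e.\ \emph{Condition} \ref{AS:terminal_condition}. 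Thus \emph{Condition} \ref{AS:terminal_condition} is equivalent to $J^{\prime\prime}(0) > 0$ for all admissible curves, and combining this equivalence with the sufficiency half of Corollary \ref{CO:necessary_sufficient} yields the claim. The proof is therefore a direct substitution followed by an appeal to an already-established sufficiency statement; no further machinery is needed.
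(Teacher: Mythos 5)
Your proposal is correct and follows essentially the same route as the paper: the paper likewise derives Eq.~(\ref{EQ:d2Jdxi201}) by eliminating $\y^{\prime\prime}(0)$ through the differentiated constraint and the transversality condition, substitutes $\y^{\prime}(0)=\boldsymbol{T}\boldsymbol{\zeta}$ to identify Condition~\ref{AS:terminal_condition} with the requirement $J^{\prime\prime}(0)>0$ for every admissible curve, and then concludes by the sufficiency part of Corollary~\ref{CO:necessary_sufficient}. Your observation that the cancellation of the $\y^{\prime\prime}(0)$-dependence is the one step deserving care matches the paper's computation exactly, so nothing is missing.
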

\noindent Accordingly, in the case of $s<n$, if the regularity conditions in  {\it Assumptions} \ref{AS:Regular_Hamiltonian} and \ref{AS:Regular_Switching} are satisfied, {\it Conditions} \ref{AS:Disconjugacy_bang}, \ref{AS:Transversality}, and \ref{AS:terminal_condition}  are sufficient to guarantee the reference extremal to be a strict strong-local optimum. In next section, a numerical implementation for {\it Conditions} \ref{AS:Disconjugacy_bang}, \ref{AS:Transversality}, and \ref{AS:terminal_condition} is derived.

\section{Numerical implementation}\label{SE:Procedure}

In this section, we assume that the reference extremal $(\bar{\x}(\cdot),\bar{\p}(\cdot))=\Gamma(\cdot,\bar{\q})$ on $[0,\bar{t}_f]$ is computed by the PMP. Then, one can  directly check the regularity conditions in Assumptions \ref{AS:Regular_Hamiltonian} and \ref{AS:Regular_Switching}. 

Once the explicit expression of the function $\phi(\x)$ is given, one can manually derive the two matrices ${\nabla \phi(\bar{\x}(\bar{t}_f))}$ and ${\nabla^2 \phi_i(\bar{\x}(\bar{t}_f))}$ for $i=1,2,\cdots,s$.  Note that the matrix $\boldsymbol{T}$ in Definition \ref{DE:C} can be computed by a simple Gram-Schmidt process if the matrix ${\nabla \phi(\bar{\x}(\bar{t}_f))}$ is derived. According to Eq.~(\ref{EQ:Transversality}), we have
\begin{eqnarray}
\bar{\boldsymbol{\nu}} =\bar{\p}(\bar{t}_f){\nabla \phi^T(\bar{\x}(\bar{t}_f))}  \left[{\nabla \phi(\bar{\x}(\bar{t}_f))} \nabla \phi^T(\bar{\x}(\bar{t}_f))\right]^{-1}.
\label{EQ:compute_nu}
\end{eqnarray}
Therefore, with the exception of the two matrices ${\nabla \x(\cdot,\bar{\q})}$ and ${\nabla \p^T(\cdot,\bar{\q})}$ on $[0,\bar{t}_f]$, all the necessary quantities for testing {\it Conditions} \ref{AS:Disconjugacy_bang}, \ref{AS:Transversality}, and \ref{AS:terminal_condition} are available.

The two vectors ${\dot{\x}(t,\bar{\q})}$ and ${\dot{\p}(t,\bar{\q})}$ can be immediately obtained once the  extremal $\Gamma(\cdot,\bar{\q})$ on $[0,\bar{t}_f]$ is given. It follows from the classical results about solutions to ODEs that the trajectory $(\x(t,\bar{\q}),\p(t,\bar{\q}))$ and its time derivative $(\dot{\x}(t,\bar{\q}),\dot{\p}(t,\bar{\q}))$ are continuously differentiable with respect to $\q$ on each subinterval $(\bar{t}_i,\bar{t}_{i+1})$ for $i=0,1,\cdots,k$. Hence, differentiating Eq.~(\ref{EQ:cannonical}) with respect to $\q$ leads to
% Note that $\frac{d \rho}{d\q^T} = 0$ since $\rho$ is a piecewise constant on nonsingular extremals. Then, differentiating Eq.~(\ref{EQ:cannonical}) leads to
\begin{eqnarray}
\frac{d}{dt}\frac{\partial \x}{\partial \q}(\cdot,\bar{\q}) &=& H_{\p\x}(\bar{\x}(\cdot),\bar{\p}(\cdot))\frac{\partial \x}{\partial \q} (\cdot,\bar{\q}) +H_{\p\p}(\bar{\x}(\cdot),\bar{\p}(\cdot))\frac{\partial \p^T}{\partial \q}(\cdot,\bar{\q}) ,\label{EQ:dt_dx_dq}\\
\frac{d}{dt}\frac{\partial \p^T}{\partial \q} (\cdot,\bar{\q}) &=& -H_{\x\x}(\bar{\x}(\cdot),\bar{\p}(\cdot))\frac{\partial \x}{\partial \q}(\cdot,\bar{\q})  - H_{\x\p}(\bar{\x}(\cdot),\bar{\p}(\cdot))\frac{\partial \p^T}{\partial \q}(\cdot,\bar{\q}),\label{EQ:dt_dp_dq}
\end{eqnarray}
on $(\bar{t}_i,\bar{t}_{i+1})$. Since the initial point $\x_0$ is fixed, we obtain
\begin{eqnarray}
\frac{\partial \x(0,\bar{\q})}{\partial \q} = \frac{d\x_0}{d\q} = \boldsymbol{0}_{n\times(n-1)}.
\label{EQ:initial_condition_x}
\end{eqnarray}
 The initial value of ${\partial \p^T(t,\bar{\q})}/{\partial \q}$ can be obtained by
\begin{eqnarray}
\frac{\partial \p^T(0,\bar{\q})}{\partial \q} = \frac{d \left(\bar{\p}_0 + \q \boldsymbol{E}^T\right)^T}{d\q} =  \boldsymbol{E},
\label{EQ:initial_condition_p}
\end{eqnarray}
where the matrix $\boldsymbol{E}$ can be computed by employing a simple Gram-Schmidt process once the vector $\frac{\partial H}{\partial \p^T}(\x_0,\bar{\p}_0)=\f(\x_0,\u(\x_0,\bar{\p}_0))$ is given. 
Note that the analytical solution to the state transition matrix $\Psi(t,\bar{t}_i)$ on coast arcs was  derived by Glandorf in \cite{Glandorf:69}. Thus, one can use
\begin{eqnarray}
\frac{\partial \x}{\partial \q}(t,\bar{\q}) = \Psi(t,\bar{t}_i)\frac{\partial \x}{\partial \q}(\bar{t}_i,\bar{\q}),\ t\in(\bar{t}_i,\bar{t}_{i+1}),
\end{eqnarray} 
to avoid numerical integration on coast arcs. Since the transition matrix $\Psi(t,\bar{t}_i)$ on coast arcs is nonsingular, i.e., $\det\left[\Psi(t,\bar{t}_i)\right] \neq 0$, it follows that there exist no conjugate points on coast arcs if the starting point of the coast arcs is not a conjugate one.

The matrices ${\partial \x}(t,\bar{\q})/{\partial \q}$ and ${\partial \boldsymbol{p}}(t,\bar{\q})/{\partial \q}$ are discontinuous at the each switching time $\bar{t}_i$ for $i=1,2,\cdots,k$. By comparing with the development in \cite{Noble:02}, we obtain that the updating formulas for the two matrices at each switching time $\bar{t}_i$ are 
\begin{eqnarray}
\begin{cases}
\frac{\partial \x}{\partial \q}(\bar{t}_i+,\bar{\q})= \frac{\partial \x}{\partial \q}(\bar{t}_i-,\bar{\q})  - \Delta \rho_i \f_1(\bar{\x}(\bar{t}_i),\bar{\boldsymbol{\omega}}(\bar{t}_i))\frac{ d t_i(\bar{\q})}{d \q},\\%\label{EQ:update_formula_x}\\
\frac{\partial \p^T}{\partial \q}(\bar{t}_i+,\bar{\q}) = \frac{\partial \p^T}{\partial \q}(\bar{t}_i-,\bar{\q}) + \Delta \rho_i \frac{\partial  \f_1}{\partial  \x^T}(\bar{\x}(\bar{t}_i),\bar{\boldsymbol{\omega}}(\bar{t}_i))\bar{\p}^T(\bar{t}_i)\frac{d t_i(\bar{\q})}{d\q},
\end{cases}
\label{EQ:update_formula}
\end{eqnarray}
where $\Delta \rho_i = \bar{\rho}(\bar{t}_i+) - \bar{\rho}(\bar{t}_i -)$. Up to now, with the exception of ${d t_i(\bar{\q})}/{d \q}$, all the necessary quantities can be computed. Differentiating $H_1(\x(t_i(\q),\q),\p(t_i(\q),\q)) = 0$ with respect to $\q$, one gets
\begin{eqnarray}
\dot{H}_1(\bar{\x}(\bar{t}_i),\bar{\p}(\bar{t}_i)) \frac{d t_i(\bar{\q})}{d \q} + \frac{\partial H_1(\bar{\x}(\bar{t}_i),\bar{\p}(\bar{t}_i))}{\partial \x^T}\frac{\partial \x}{\partial \q}(\bar{t}_i,\bar{\q}) + \frac{\partial H_1(\bar{\x}(\bar{t}_i),\bar{\p}(\bar{t}_i)) }{\partial \p}\frac{\partial \p^T}{\partial \q}(\bar{t}_i,\bar{\q}) = 0.\nonumber
\end{eqnarray}
Since $\dot{H}_1(\bar{\x}(\bar{t}_i),\bar{\p}(\bar{t}_i)) \neq 0$ by {\it Assumption} \ref{AS:Regular_Switching}, we eventually obtain
\begin{eqnarray}
\frac{d t_i(\bar{\q})}{d \q}  = -\left[\frac{\partial H_1(\bar{\x}(\bar{t}_i),\bar{\p}(\bar{t}_i))}{\partial \x^T}\frac{\partial \x}{\partial \q}(\bar{t}_i,\bar{\q}) +\frac{\partial H_1(\bar{\x}(\bar{t}_i),\bar{\p}(\bar{t}_i)) }{\partial \p}\frac{\partial \p^T}{\partial \q}(\bar{t}_i,\bar{\q})\right]/\dot{H}_1(\bar{\x}(\bar{t}_i),\bar{\p}(\bar{t}_i)).\nonumber
%\label{EQ:dti_dq}
\end{eqnarray}

Therefore, in order to compute the two matrices ${\partial \x}(\cdot,\bar{\q})/{\partial \q}$ and ${\partial \boldsymbol{p}^T}(\cdot,\bar{\q})/{\partial \q}$ on $[0,\bar{t}_f]$,  it amounts to choose the initial conditions in Eq.~(\ref{EQ:initial_condition_x}) and Eq.~(\ref{EQ:initial_condition_p}), then to numerically integrate  the homogeneous linear differential equations in Eq.(\ref{EQ:dt_dx_dq}) and Eq.~(\ref{EQ:dt_dp_dq}) on each smooth bang arc while using the updating formulas in Eq.~(\ref{EQ:update_formula}) once a switching point is encountered.  

\section{Numerical Examples}\label{SE:Numerical}

A typical orbital transfer from an inclined geosynchronous transfer orbit to the geostationary one is considered.  The {\it modified equinoctial orbital elements} (MEOE) developed by Broucke and Cefola \cite{Broucke:72} are used for numerical computations. The MEOE describe the orbit by the semilatus rectum $P\in\mathbb{R}$, the eccentricity vector $(e_x,e_y)\in\mathbb{R}^2$, the inclination vector $(h_x,h_y)\in\mathbb{R}^2$, and the true longitude $l\in\mathbb{R}$.  The values of $P$, $e_x$, $e_y$, $h_x$, and $h_y$ for the initial and final orbits  are presented in Tab. $\ref{Tab:Parameters}$.
 \begin{table}[!ht]
    \renewcommand{\arraystretch}{1}
     \caption[]{The values of $P$, $e_x$, $e_y$, $h_x$, and $h_y$ for initial and final orbits.}
    \centering
    \begin{tabular}{ccc}
    \hline\hline
    {\it MEOE} & 
   {Initial orbit}
    &{Final orbit}\\
    \hline 
    $P$     &  $11,625.00$ km                  & $42,165.00$ km        \\%\hline
    $e_x$     & 0.75              & 0     \\%\hline
    $e_y$     & 0        & 0      \\%\hline
    $h_x$     & 6.12$\times$10$^{-2}$       & 0   \\%\hline
    $h_y$     & 0          & 0      \\ \hline
    \end{tabular}
\label{Tab:Parameters}
\end{table}
The Earth gravitational constant $\mu$ in Eq.~(\ref{EQ:Sigma}) is $398600.47$ km$^{3}$/s$^{2}$. The initial mass $m_0$ of the spacecraft is $1500$ kg and the specific impulse for the engine is $I_{sp} = 2000$ s. Since $\beta = 1/(I_{sp}g_0)$ where $g_0 = 9.8$ m/s$^2$, we obtain $\beta = 5.1 \times 10^{-5}$ s/m. The initial true longitude is fixed as $\pi$, i.e., $l_0 = \pi$ rad. We consider two cases (case A and case B) of orbital transfers with different value of $u_{max}$ and different final true longitude.

\subsection{Case A}\label{Subsection:first_example}

Let $l_f =9\times 2\pi$ rad and $u_{max} = 10$ N for case A. The homotopy method proposed in  \cite{Gergaud:06} is employed to compute the candidate solution (or the reference extremal). It is worth remarking that, since abnormal extremals do not exist for the fuel-optimal problem, the homotopy method converges if no conjugate points occur \cite{Trelat:12}.  The computed final time is $\bar{t}_f\approx 146.36$ h. The 3-dimensional profile of the position vector $\r(\cdot)$ on $[0,\bar{t}_f]$ and its projections onto $xy$- and $yz$-planes are plotted in Fig. \ref{Fig:Transferring_Orbit3},
\begin{figure}[!ht]
 \centering\includegraphics[trim=0cm 0cm 0cm 0cm, clip=true, width=4in, angle=0]{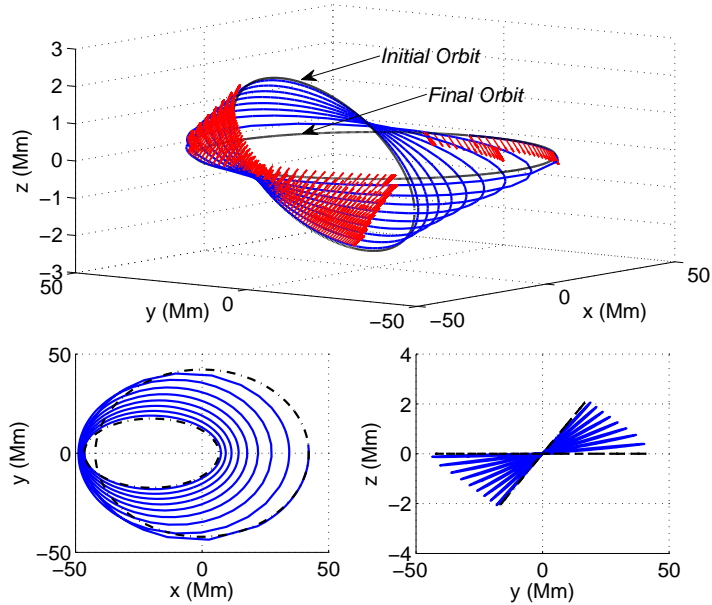}
 \caption[]{The 3-dimensional profile of the position vector $\r$ for case A and the arrows denote the thrust direction on burn arcs. The left and right bottom plots are the projections of $\r$ onto $xy$- and $yz$-plane, respectively.}
 \label{Fig:Transferring_Orbit3}
\end{figure}
and the time histories of $e_x$, $e_y$, $h_x$, and $h_y$ are demonstrated in Fig. \ref{Fig:exeyhxhy1}.
\begin{figure}[!ht]
 \centering\includegraphics[trim=1cm 0cm 0cm 1cm, clip=true, width=4in, angle=0]{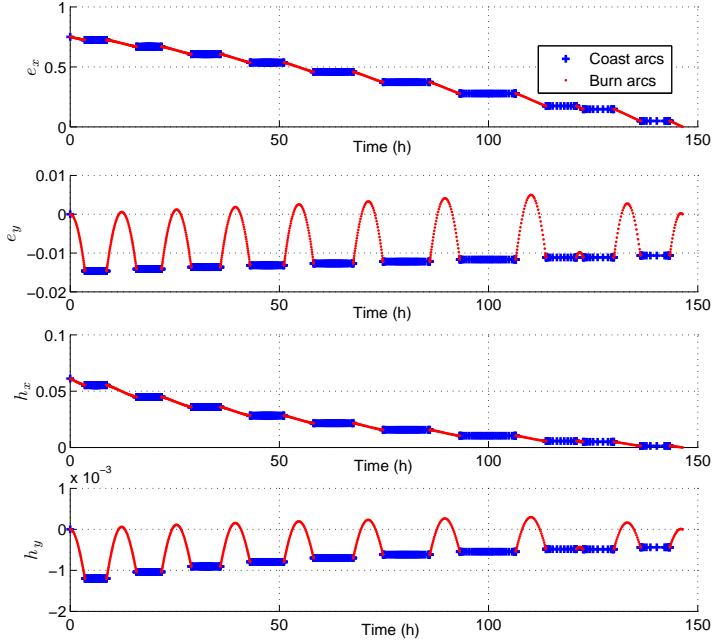}
 \caption[]{The time histories of $e_x$, $e_y$, $h_x$, and $h_y$ for case A.}
 \label{Fig:exeyhxhy1}
\end{figure}
 It is apparent that the number of burn arcs for case A is 11 with 20 switching points.
\begin{figure}[!ht]
 \centering\includegraphics[trim=1cm 0cm 0.0cm 0cm, clip=true, width=4in, angle=0]{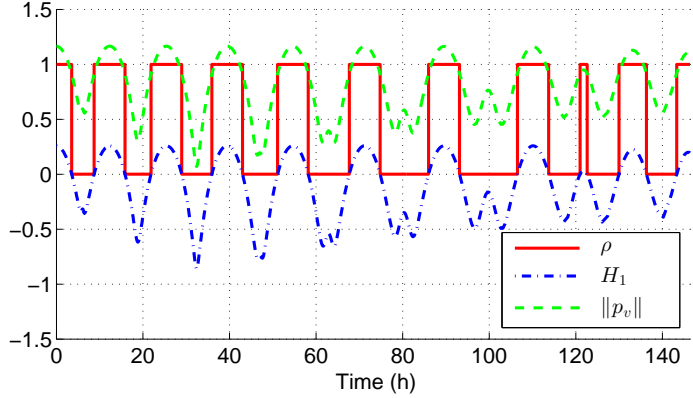}
 \caption[]{The time histories of $\rho$, $H_1$, and $\parallel \p_v\parallel$ for case A.}
 \label{Fig:Transferring_Orbit4}
\end{figure}
We can also see from Fig. \ref{Fig:Transferring_Orbit4} that each switching point is regular (cf. {\it Assumptions} \ref{AS:Regular_Switching}). Then, directly applying the numerical procedure in Sect. \ref{SE:Procedure}, one can compute the piecewise continuous function $\delta(\cdot)$ on $[0,\bar{t}_f]$. In order to have a clear view, Fig. \ref{Fig:det_4xtf}
\begin{figure}[!ht]
 \centering\includegraphics[trim=1.0cm 0.5cm 1.5cm 0cm, clip=true, width=4in, angle=0]{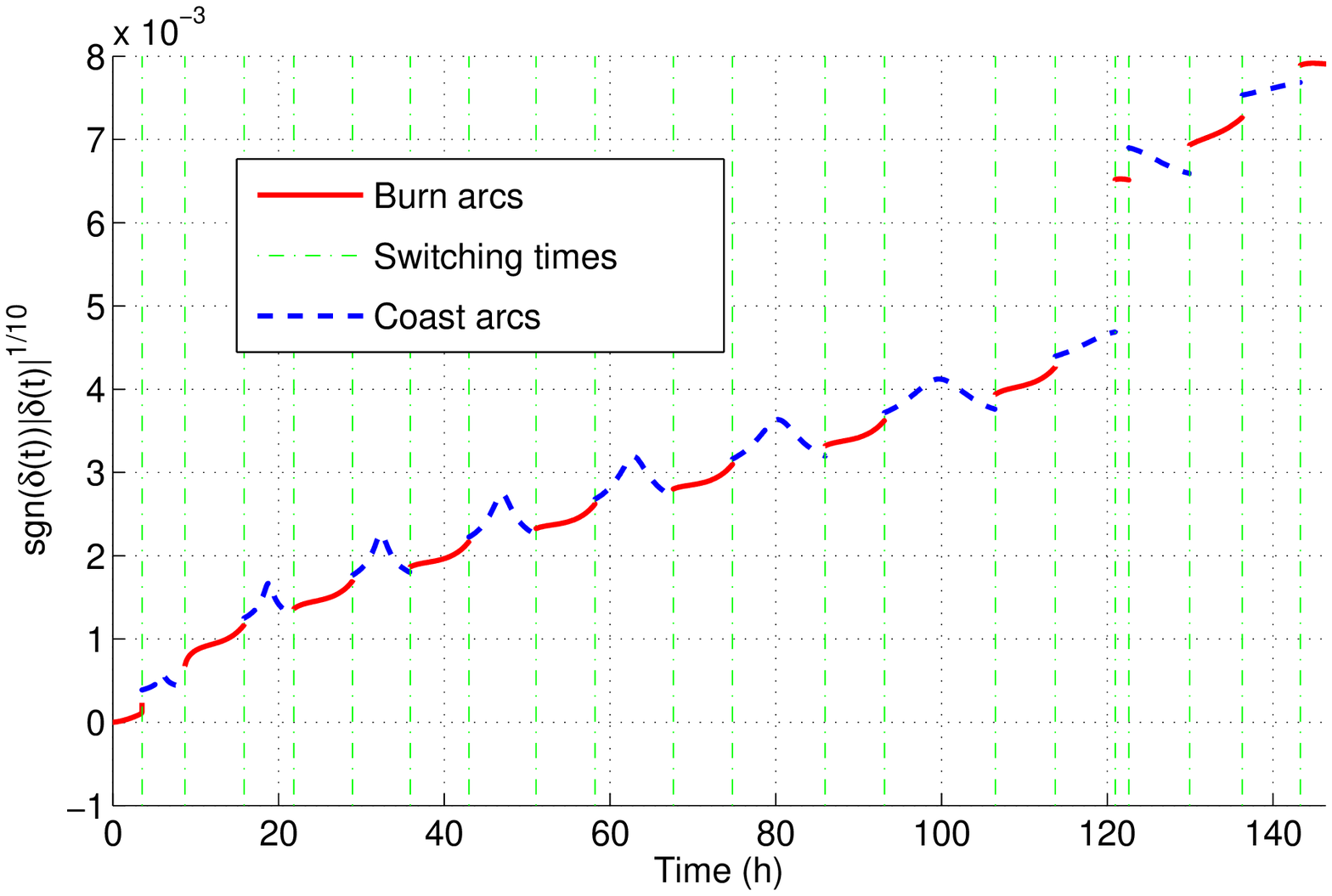}
 \caption[]{The time histories of ${sgn}(\delta(t))|\delta(t)|^{1/10}$ for case A.}
 \label{Fig:det_4xtf}
\end{figure}
shows instead the profile of $\text{sgn}(\delta(\cdot))\times |\delta (\cdot)|^{1/10}$, which can capture the sign property of $\delta(\cdot)$ on $[0,\bar{t}_f]$. We can see that there exist no zeros on the semi-open interval $(0,\bar{t}_f]$ and no sign change at each switching point, i.e., {\it Conditions} \ref{AS:Disconjugacy_bang} and \ref{AS:Transversality} are satisfied.  

Since the final point is not fixed, we have to check  {\it Condition} \ref{AS:terminal_condition}. Note that only the final mass $m_f$ is left free. We thus obtain  $s = 6$ and
   ${\nabla \phi(\bar{\x}(\bar{t}_f))}= \big[
I_{n-1} \ \ \boldsymbol{0}_{(n-1)\times 1} \big],$
which implies $\boldsymbol{T} = [
\boldsymbol{0}_{1\times 6} \ \ 1]^T$ and ${\nabla^2 \phi_i(\bar{\x}(\bar{t}_f))}= 0$ for $i=1,2,\cdots,s$. 
Substituting these values into Eq.~(\ref{EQ:numerical_second_transversality}) and Eq.~(\ref{EQ:compute_nu}), we obtain 
\begin{eqnarray}
 \boldsymbol{T}^T\Big\{{\nabla \p^T(\bar{t}_f,\bar{\q})}\left[{\nabla \x(\bar{t}_f,\bar{\q})} \right]^{-1} - \sum_{i=1}^{s} \bar{\nu}_i {\nabla^2 \phi_i(\bar{\x}(\bar{t}_f))} \Big\}\boldsymbol{T} \approx 4.6186\times 10^{11} \succ 0,\nonumber
 \end{eqnarray}
 which indicates that {\it Condition} \ref{AS:terminal_condition} is met.
Therefore, the computed extremal trajectory for case A realizes a strict strong-local optimum according to {\it Theorem} \ref{CO:cor3}. 

\subsection{Case B}

For case B, let $l_f =  19\times2\pi$ rad and we consider a lower value of $u_{{max}}$, i.e., $u_{max} = 5$ N. The optimal candidate solution is computed and shown in Fig. \ref{Fig:Transferring_Orbit}.
\begin{figure}[!ht]
 \centering\includegraphics[trim=0cm 0.0cm 0.0cm 0cm, clip=true, width=4in, angle=0]{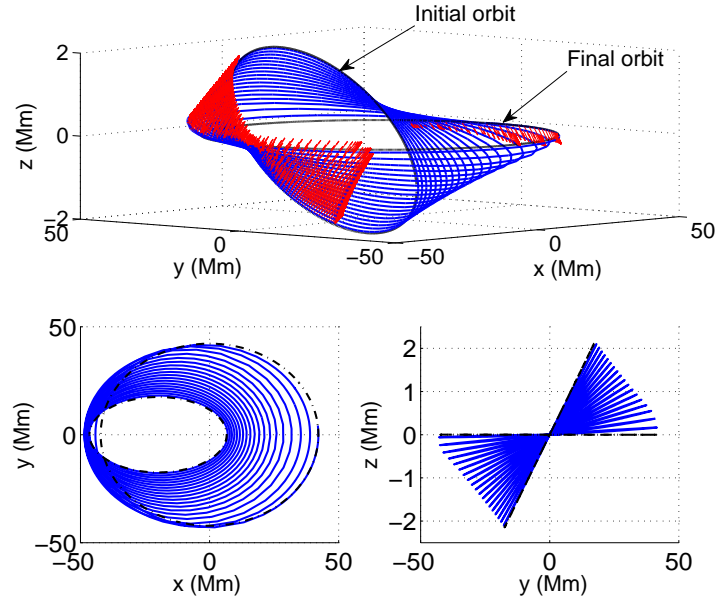}
 \caption[]{The 3-dimensional profile of $\r$ for case B and the arrows denote the thrust direction on burn arcs. The left and right bottom plots are the projections of $\r$ onto $xy$- and $yz$-planes, respectively.}
 \label{Fig:Transferring_Orbit}
\end{figure}
The computed transfer time is  $\bar{t}_f \approx 316.38$ h. The profiles of $e_x$, $e_y$, $h_x$, and $h_y$ against time are plotted in Fig. \ref{Fig:exeyhxhy}.
\begin{figure}[!ht]
 \centering\includegraphics[trim=0.4cm 0.0cm 0.5cm 0cm, clip=true, width=4in, angle=0]{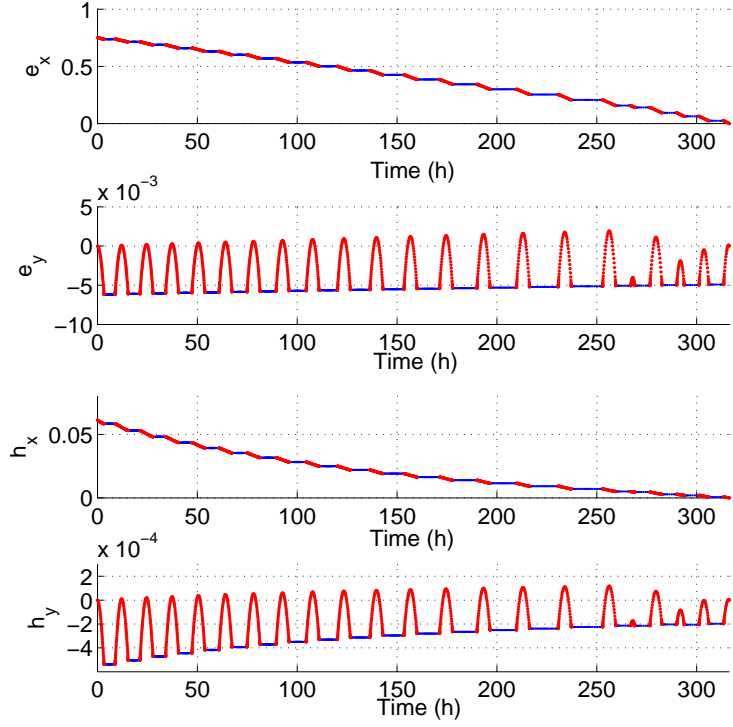}
 \caption[]{The time histories of $e_x$, $e_y$, $h_x$, and $h_y$ for case B.}
 \label{Fig:exeyhxhy}
\end{figure}
To see the regularity conditions, the time histories of $\rho$, $H_1$, and $\parallel \p_v \parallel$ are illustrated in Fig. \ref{Fig:control_h1}, showing that  {\it Assumption} \ref{AS:Regular_Switching} is met.
\begin{figure}[!ht]
 \centering\includegraphics[trim=1cm 0.0cm 1cm 0cm, clip=true, width=4in, angle=0]{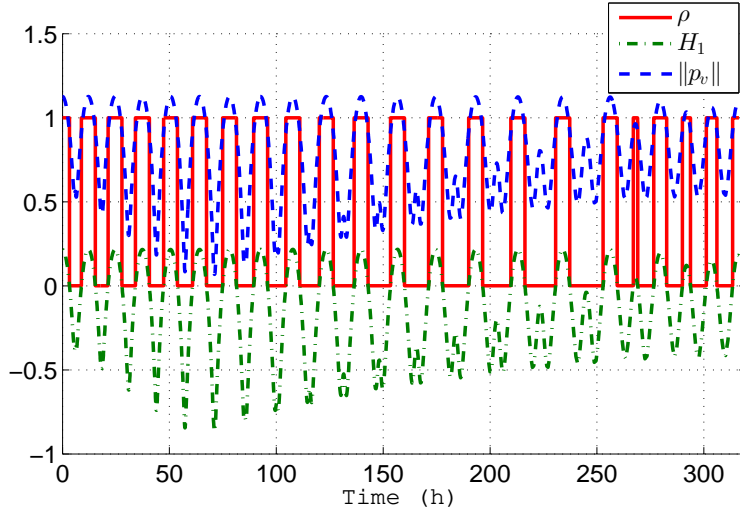}
 \caption[]{The time histories $\rho$, $\|\p_{v}\|$, and $H_1$ for case B.}
 \label{Fig:control_h1}
\end{figure}
By applying the numerical procedure in Sect. \ref{SE:Procedure}, the profile of $\text{sgn}(\delta(\cdot))|\delta(\cdot)|^{1/18}$ on $[0,\bar{t}_f]$ is computed and demonstrated in Fig. \ref{Fig:det}.
\begin{figure}[!ht]
 \centering\includegraphics[trim=0cm 0cm 0cm 0cm, clip=true, width=4in, angle=0]{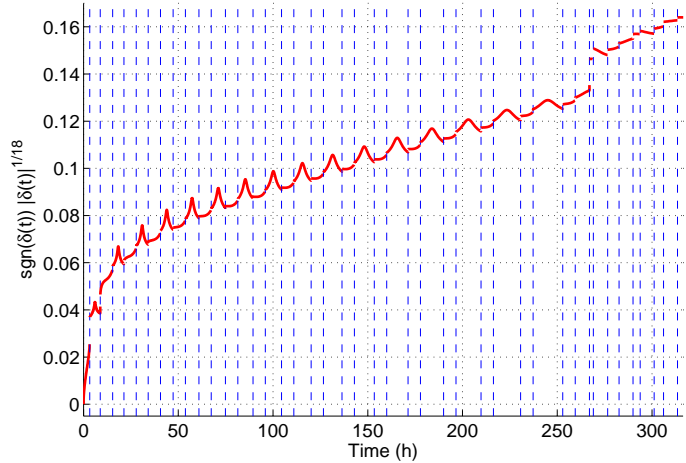}
 \caption[]{The profile of $sgn(\delta(t))|\delta(t)|^{1/18}$ for case B and the vertical dashed lines denote switching times.}
 \label{Fig:det}
\end{figure}
It is clear that there are no zeros on the semi-open interval $(0,\bar{t}_f]$ and no sign change at each switching time, i.e., {\it Conditions} \ref{AS:Disconjugacy_bang} and \ref{AS:Transversality} are met. The same as case A, we have
${\nabla \phi(\bar{\x}(\bar{t}_f))}= \big[
I_{n-1} \ \ \boldsymbol{0}_{(n-1)\times 1}\big],$
 $\boldsymbol{T} = [
\boldsymbol{0}_{1\times 6} \ \ 1]^T$, and ${\nabla^2\phi_i(\bar{\x}(\bar{t}_f))} = \boldsymbol{0}$ for $i=1,2,\cdots,s$. Thus, directly substituting the numerical values of $\bar{\x}(\bar{t}_f)$ and $\bar{\p}(\bar{t}_f)$ into Eq.~(\ref{EQ:numerical_second_transversality}) and Eq.~(\ref{EQ:compute_nu}), we obtain
\begin{eqnarray}
 \boldsymbol{T}^T\Big\{{\nabla \p^T(\bar{t}_f,\bar{\q})}\left[{\nabla \x(\bar{t}_f,\bar{\q})} \right]^{-1} - \sum_{i=1}^{s} \bar{\nu}_i {\nabla^2 \phi_i(\bar{\x}(\bar{t}_f))} \Big\}\boldsymbol{T} \approx 8.402\times 10^{9} \succ 0,\nonumber
 \end{eqnarray}
 which indicates that {\it Condition} \ref{AS:terminal_condition} is met.
Up to now, all the conditions in Theorem \ref{CO:cor3} are satisfied. Therefore, the computed trajectory for case B realizes a strict strong-local optimum.
 
To see the occurrence of conjugate points, the profile of $\text{sgn}(\delta(\cdot))|\delta(\cdot)|^{1/18}$ on the  time interval extended to $[0,1000]$ is demonstrated in Fig. \ref{Fig:det_extended}. 
\begin{figure}[!ht]
 \centering\includegraphics[trim=0cm 0cm 0cm 0cm, clip=true, width=4in, angle=0]{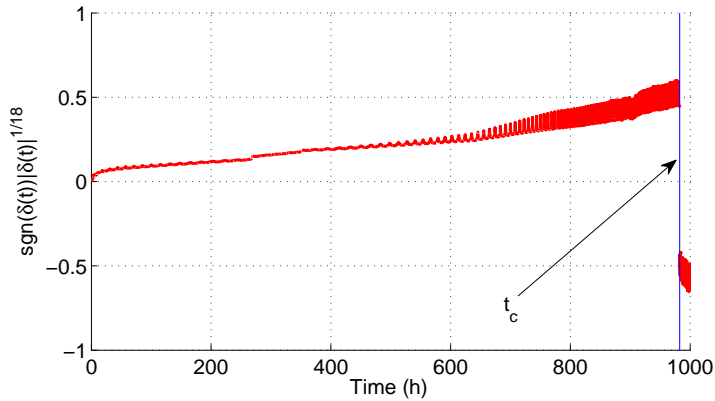}
 \caption[]{The profile of $sgn(\delta(t))|\delta(t)|^{1/18}$ for case B on the extended time interval $[0,1000]$.}
 \label{Fig:det_extended}
\end{figure}
Apparently, there is a sign change at a switching time $t_c\approx 982.63$  which violates {\it Condition} \ref{AS:Transversality}. Thus,  the trajectory $\bar{\x}(\cdot)$ on $[0,t_f]$ of case B is not optimal any more if $t_f > t_c$.  Note that the classical method of calculus of variations \cite{Bryson:69} to test the explosive time of the matrix ${\nabla  \p^T(t,\bar{\q})}\left[{\nabla \x(t,\bar{\q})}\right]^{-1}$ fails to find the conjugate time $t_c$ in Fig. \ref{Fig:det_extended}.

\section{Conclusions}
This paper is concerned with establishing the second-order necessary and sufficient optimality conditions as well as their numerical implementations for the free-time multi-burn orbital transfer problems. Through analyzing the projection behaviour of the parameterized family of extremals constructed in this paper, two no-fold conditions (cf. Conditions \ref{AS:Disconjugacy_bang} and \ref{AS:Transversality}) ensuring the projection of the parameterized family to be a diffeomorphism are established. As a result, it is obtained that conjugate points for the multi-burn problem may occur not only on burn arcs but also at switching times and that the absence of conjugate points  is sufficient to guarantee the reference extremal to be locally optimal if the final state is fixed. For the case that the final state is not fixed but varies on a smooth target manifold, an extra second-order necessary and sufficient condition, involving the geometry of the target manifold, is established. It is worth remarking that the development in this paper is applicable  not only to bang-bang extremals but also to totally smooth extremals, e.g., the extremals of time-optimal orbital transfers.   Finally, two fuel-optimal transfer trajectories are calculated, and the optimality conditions developed in this paper are tested to show that the two computed extremals are locally optimal.

\end{document}